\newtheorem{theorem}{Theorem}[section]
\newtheorem{lemma}[theorem]{Lemma}
\newtheorem{corollary}[theorem]{Corollary}
\begin{document}
\title{Notes on factor-criticality, extendibility and independence number
\thanks{Corresponding email address: eltonzhang2001@yahoo.com.cn(Zan-Bo Zhang).}}
\author{Zan-Bo Zhang$^1$$^2$\thanks{Supported by the Scientific Research Foundation of Guangdong Industry Technical
College, granted No. 2007-10.}, Dingjun Lou$^2$, Xiaoyan
Zhang$^3$\thanks{Supported by Jiangsu Planned Projects for
Postdoctoral Research Funds (0602023C).}
\\
\\ $^1$Department of Computer Engineering, Guangdong
\\ Industry Technical College, Guangzhou 510300, China
\\
\\ $^2$Department of Computer Science, Sun Yat-sen
\\ University, Guangzhou 510275, China
\\
\\ $^3$School of Mathematics and Computer Science \&
\\ Institute of Mathematics,
\\ Nanjing Normal University, Nanjing 210097, China }
\date{}
\maketitle

\begin{abstract}
In this paper, we give a sufficient and necessary condition for a
$k$-extendable graph to be $2k$-factor-critical when $k=\nu/4$,
and prove some results on independence numbers in
$n$-factor-critical graphs and $k\frac{1}{2}$-extendable graphs.
$\newline$\noindent\textbf{Key words}: $k$-extendable,
$k\frac{1}{2}$-extendable, $n$-factor-critical, independence
number
\end{abstract}
\section{Introduction and preliminary results}
We consider undirected, simple, finite and connected graphs in
this paper. All terminologies and notations undefined follow that
of \cite{BM1} and \cite{LP1}.

Let $G$ be a graph, vertex set and edge set of $G$ are denoted by
$V(G)$ and $E(G)$. The number of vertices of $G$, the number of
odd components of $G$, the independence number of $G$, the edge
independence number of $G$, the connectivity of $G$ and the
minimal degree of vertices of $G$ are denoted by $\nu(G)$, $o(G)$,
$\alpha(G)$, $\alpha^\prime(G)$, $\kappa(G)$ and $\delta(G)$,
respectively. Let $G_{1}$ and $G_{2}$ be two disjoint graphs. The
\emph{union} $G_1\cup G_2$ is the graph with vertex set
$V(G_{1})\cup V(G_{2})$ and edge set $E(G_{1})\cup E(G_{2})$. The
\emph{join} $G_1\vee G_2$ is the graph obtained from $G_1 \cup
G_2$ by joining each vertex of $G_{1}$ to each vertex of $G_{2}$.
Let $X$ and $Y$ be two disjoint subsets of $V(G)$, the number of
edges of $G$ from $X$ to $Y$ is denoted by $e(X,Y)$.

A connected graph $G$ is said to be \emph{$k$-extendable} for
$0\leq k\leq (\nu-2)/2$, if it contains a matching of size $k$ and
any matching in $G$ of size $k$ is contained in a perfect matching
of $G$. The concept of $k$-extendable graphs was introduced by
Plummer in \cite{P1}. In \cite{Y1}, Yu generalized the idea of
$k$-extendibility to $k\frac{1}{2}$-extendibility for graph of odd
order. A connected graph $G$ is said to be
\emph{$k\frac{1}{2}$-extendable} if (1) for any vertex $v$ of $G$
there exists a matching of size $k$ in $G-v$, and (2) for every
vertex $v$ of $G$, every matching of size $k$ in $G-v$ is
contained in a perfect matching of $G-v$.

A graph $G$ is said to be \emph{$n$-factor-critical}, for $0\leq n
\leq \nu-2$, if $G-S$ has a perfect matching for any $S\subseteq
V(G)$ with $|S|=n$. For $n=1$, 2, that is \emph{factor-critical}
and \emph{bicritical}. The concept of $n$-factor-critical graphs
was introduced by Favaron \cite{F1} and Yu \cite{Y1},
independently.

In \cite{P1}, Plummer showed a result on connectivity in
$k$-extendable graphs.
\begin{theorem} \label{theorem:P1}
If $G$ is $k$-extendable, then $\kappa(G)\geq k+1$.
\end{theorem}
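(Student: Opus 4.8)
The plan is to argue by contradiction, exploiting the fact that a $k$-extendable graph always possesses a perfect matching. First I would record the preliminary observations: since $G$ contains a matching of size $k$ and every such matching extends to a perfect matching, $G$ has a perfect matching $M^\ast$; in particular $\nu$ is even and $k\le \nu/2-1$. Suppose now, for contradiction, that $\kappa(G)=c\le k$, and fix a minimum vertex cut $S$ with $|S|=c$. Write $C_1,\dots,C_t$ (with $t\ge 2$) for the components of $G-S$. I would also invoke the standard property of a minimum cut: every vertex of $S$ has a neighbour in each $C_i$, since otherwise deleting such a vertex from $S$ would leave a smaller cut.

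The core idea is to manufacture a matching of size exactly $k$ that cannot be extended. Concretely, I aim to build a matching $M$ with $|M|=k$ and $S\subseteq V(M)$ such that some component of $G-V(M)$ has odd order. Once $S\subseteq V(M)$, the components of $G-V(M)$ are exactly the pieces $C_i-V(M)$, which are mutually non-adjacent; an odd one among them can never be covered, so $G-V(M)$ has no perfect matching and $M$ fails to extend, contradicting $k$-extendability.

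To construct $M$, I would first choose a matching $F$ that saturates $S$, using edges inside $S$ together with edges from $S$ into the components, and then pad $F$ with edges lying inside the components until the total size is exactly $k$; this is possible because $|F|\le c\le k$ and, since $k\le\nu/2-1$, there remain enough independent edges (for instance drawn from $M^\ast$) to fill the matching out. The key parity observation is that padding edges remove an even number of vertices from each component, so the parity of the leftover of each $C_i$ is already decided by $F$, namely by how many vertices of $C_i$ the matching $F$ consumes. Thus it suffices to choose $F$ so that $C_i-V(F)$ is odd for some $i$. Here I would split into two cases: if some $C_i$ has odd order, I route $F$ so as to avoid $C_i$ altogether, consuming an even number (namely zero) of its vertices while still saturating $S$ through the remaining components; if every $C_i$ is even, I instead force an odd number of crossing edges of $F$ into a single component, thereby flipping its parity.

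The main obstacle is precisely this last existence step: showing that a saturating matching $F$ of the required shape exists. This is a Hall-type argument run on the bipartite graph between $S$ and the components, and it is exactly where the minimum-cut neighbour property and the bound $k\le\nu/2-1$ do their work; one must simultaneously guarantee that $S$ can be saturated, that the prescribed component parity can be achieved, and that the matching can then be completed to size exactly $k$. I expect verifying all three of these conditions \emph{together} — rather than any single one in isolation — to be the delicate part of the proof.
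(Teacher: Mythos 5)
This theorem is not proved in the paper at all: it is Plummer's result, quoted from reference [P1], so your attempt can only be judged against the known proof. Your overall strategy is in fact the right one and matches the classical argument: assume a cut $S$ with $|S|\le k$, use the minimum-cut property that every vertex of $S$ has a neighbour in every component $C_i$ of $G-S$, build a matching $M$ of size $k$ covering $S$ whose removal strands an odd piece $C_i-V(M)$, and contradict extendability. Your parity bookkeeping (padding edges lie inside components once $S\subseteq V(M)$, so only the saturating part $F$ decides the parities) is also correct.

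The problem is that the step you explicitly defer --- producing $F$ saturating $S$ with the prescribed parity --- is the entire mathematical content of the theorem, and the two concrete claims you do commit to are faulty. First, in your Case A, ``routing $F$ to avoid the odd component $C_i$'' can be outright impossible: take $G-S$ with two components, one of them a single vertex $u$ adjacent to all of $S$, with $S$ independent and $|S|$ even; then both components are odd, and any $F$ avoiding the big component would have to saturate $S$ inside $S\cup\{u\}$, which cannot be done. So the target component must be chosen adaptively, and even then plain Hall can fail (many vertices of $S$ may share the single neighbour $u$); the standard repair is not ``more Hall'' but a separate idea: a Hall violator $A\subseteq S$ yields a new cut $T=(S\setminus A)\cup (N(A)\cap C_1)\cup\{u\}$ of size at most $\kappa(G)$ in which the vertex $u$ has no neighbour in the components of $G-T$ lying inside $C_1$, contradicting the minimum-cut property. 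Nothing of this sort appears in your sketch. Second, your padding step is quantitatively wrong: the number of edges of $M^\ast$ avoiding $V(F)$ is only guaranteed to be at least $\nu/2-2|F|$, and the inequality $\nu/2-2|F|\ge k-|F|$ requires $|F|\le \nu/2-k$, which can fail because $|F|$ may be as large as $k$ while only $\nu\ge 2k+2$ (not $\nu\ge 4k$) is available. The clean fix is to invoke the fact that a $k$-extendable graph is $m$-extendable for every $m\le k$ (Lemma 1.9 of this paper, originally Plummer's); then a saturating matching of any size at most $k$ that strands an odd component is already a contradiction, and no padding to size exactly $k$ is needed. As written, your proposal is a correct plan with its decisive step missing and its two supporting constructions broken.
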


Favaron obtained a similar result for $n$-factor-critical graphs
in \cite{F1}.
\begin{theorem} \label{theorem:F1}
Every $n$-factor-critical graph is $n$-connected.
\end{theorem}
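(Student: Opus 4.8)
The plan is to argue by contradiction against the connectivity conclusion, relying only on the elementary half of Tutte's theorem: a graph possessing a component of odd order cannot have a perfect matching. First I would dispose of the order requirement for $n$-connectivity. Since the definition of $n$-factor-criticality presupposes $0\le n\le \nu-2$, we have $\nu\ge n+2>n$, so $G$ has more than $n$ vertices and it remains only to show that $G$ has no vertex cut of size smaller than $n$.

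Suppose, for contradiction, that $T$ is a cut set with $|T|=t<n$, and let $C_1,\dots,C_m$ (with $m\ge 2$) be the components of $G-T$, writing $c_i=|C_i|$. The idea is to enlarge $T$ to a set $S$ of exactly $n$ vertices by deleting $n-t$ further vertices from inside the components, chosen so that some component of $G-S$ has odd order. Since deleting vertices can only split each $C_i$ into smaller pieces that stay inside that same $C_i$, the graph $G-S$ will then contain an odd component, hence no perfect matching; this contradicts $n$-factor-criticality, because $G-S$ with $|S|=n$ must admit one.

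Everything therefore reduces to the combinatorial core: choosing integers $d_i$ with $0\le d_i\le c_i$ and $\sum_i d_i=n-t$ (the number deleted from $C_i$) so that $c_i-d_i$ is odd for at least one $i$; such a $C_i$ then retains an odd, and in particular nonempty, set of vertices and so contributes an odd component. I expect this to be the main obstacle, since a naive single-component adjustment can be blocked when one component is very large. The clean way around it exploits $m\ge 2$ together with the strict inequalities $0<n-t$ (from $t<n$) and $n-t<\nu-t=\sum_i c_i$ (from $\nu>n$): these guarantee that the feasible set of deletion vectors is not a single point, so there exist two feasible vectors differing by transferring one unit of deletion from some $C_i$ to some $C_j$ with $i\ne j$ (one can always find a coordinate with $d_i\ge 1$ and a distinct coordinate with $d_j<c_j$, since every $c_\ell\ge 1$). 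Such a unit shift flips the parities of $d_i$ and $d_j$ simultaneously, so the two vectors cannot both have every $c_i-d_i$ even; hence at least one of them yields the desired odd remainder.

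Finally I would assemble the pieces. The chosen set $S=T\cup D$, where $D$ realizes the deletion vector above, has $|S|=n$, and $G-S$ contains an odd component and therefore has no perfect matching, contradicting $n$-factor-criticality. Consequently $G$ has no cut set of size below $n$, and combined with $\nu>n$ this yields $\kappa(G)\ge n$, i.e. $G$ is $n$-connected.
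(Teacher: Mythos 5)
Your proof is correct, but there is nothing in the paper to compare it against: Theorem \ref{theorem:F1} is quoted as a known result of Favaron \cite{F1}, and the paper supplies no proof of it. Taken on its own merits, your argument is sound and pleasantly elementary, using only the trivial direction of Tutte's theorem (a graph with an odd component has no perfect matching) plus the hypothesis $\nu\geq n+2$ built into the definition of $n$-factor-criticality. The one genuinely delicate step --- enlarging a cut set $T$ of size $t<n$ to a set $S$ of size exactly $n$ so that some component of $G-T$ retains an odd number of vertices --- is handled correctly: since every $c_\ell\geq 1$ and $m\geq 2$, any feasible deletion vector admits coordinates $i\neq j$ with $d_i\geq 1$ and $d_j<c_j$, and the unit transfer $d_i\mapsto d_i-1$, $d_j\mapsto d_j+1$ flips exactly two of the parities $c_\ell-d_\ell$, so the two feasible vectors cannot both leave all remainders even; you also correctly observe that an odd remainder is automatically nonempty, and that further deletions inside a component $C_i$ can only produce components still contained in $C_i$, so an odd total forces an odd component of $G-S$. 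It is worth noting that your parity-adjustment mechanism is the same engine that drives the characterization the paper does state as Lemma \ref{lemma:Y2} ($G$ is $n$-factor-critical iff $\nu\equiv n \pmod 2$ and $o(G-S)\leq|S|-n$ whenever $|S|\geq n$); a proof routed through that lemma would take $S\supseteq T$ with $|S|=n$ and conclude $o(G-S)=0$, but it would still need exactly your transfer argument to reach a contradiction, so your direct route loses nothing and assumes less.
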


In \cite{LY1}, Lou and Yu improved the lower bound of connectivity
for $k$-extendable graph with large $k$.
\begin{theorem} \label{theorem:LY1}
If $G$ is a $k$-extendable graph on $\nu$ vertices with $k\geq
\nu/4$, then either $G$ is bipartite or $\kappa(G)\geq 2k$.
\end{theorem}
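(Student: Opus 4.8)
The plan is to argue by contradiction, so suppose $G$ is non-bipartite yet $\kappa(G)\le 2k-1$. By Theorem~\ref{theorem:P1} we already know $\kappa(G)\ge k+1$, so a minimum vertex cut $S$ satisfies $k+1\le s:=|S|\le 2k-1$. Let $C_1,\dots,C_t$ ($t\ge 2$) be the components of $G-S$. Since $G$ is $k$-extendable it has a perfect matching, so $\nu$ is even; combined with $k\ge\nu/4$ this gives $\sum_i|C_i|=\nu-s\le 3k-1$, i.e. the components are collectively small. The first structural fact I would record is that, by minimality of $S$, $N(C_i)=S$ for every $i$: otherwise $N(C_i)\subsetneq S$ would separate $C_i$ from the remaining (nonempty) components, contradicting $\kappa(G)=s$. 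Hence every vertex of $S$ has a neighbour in every component.

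The engine of the proof is the reformulation of extendability through Tutte's theorem: a matching $N$ of size $k$ extends to a perfect matching if and only if $G-V(N)$ has a perfect matching, and the latter fails as soon as some $X\subseteq V(G)\setminus V(N)$ satisfies $o\big((G-V(N))-X\big)>|X|$. I would always take $X$ to be the part of $S$ left uncovered by $N$. Then $V(N)\cup X\supseteq S$, so $(G-V(N))-X$ is the disjoint union of the residual pieces $C_i-V(N)$, and the obstruction reads: the number of odd pieces among the $C_i-V(N)$ exceeds $|S\setminus V(N)|$. The goal is therefore to build a matching of size exactly $k$ that covers $S$ as efficiently as possible while leaving at least one odd residual piece; in the cleanest case, if $N$ covers all of $S$, then a single odd piece already gives $o\ge 1>0=|X|$.

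This is exactly where non-bipartiteness must enter, and it is the crux. For bipartite $G$ the parities balance: mimicking $K_{k+1,k+1}$, where deleting one side leaves $k+1$ odd singleton components but that side cannot be saturated by only $k$ edges, the best one can achieve is to cover all but a few vertices of $S$ and strand exactly as many odd pieces, giving $o=|X|$ with equality (Tutte-tight) and blocking no $k$-matching --- consistent with the theorem permitting low connectivity in the bipartite case. When $G$ is non-bipartite I would use an odd cycle to break this tie by one. Because every vertex of $S$ reaches every component, an odd cycle yields either an edge inside $S$ (letting one matching edge absorb two vertices of $S$ and thereby freeing budget to strand one extra odd piece) or an odd-length segment inside some component that can be threaded into the matching so that an odd number of that component's vertices are covered, flipping its residual parity. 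In either case one gains a single odd piece over $|X|$, so $o\big((G-V(N))-X\big)=|X|+1>|X|$.

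The main obstacle is the bookkeeping that makes this tie-break uniform across configurations. One must run a case analysis on the number $t$ of components and on the residues $|C_i|\bmod 2$ (the count of odd ones being at most $s$, since $G$ has a perfect matching), locate the odd cycle relative to $S$ and the components, and in each case produce the matching explicitly: take a maximum matching of $G[S]$, extend it by a maximum (possibly deficient) matching of the remaining vertices of $S$ into distinct component vertices, and pad inside the components up to size exactly $k$. Two points need care: that such a near-saturating matching of $S$ into the components exists with the right deficiency --- a defect version of Hall's condition, any failure of which is itself a bipartite-type separation --- and that the total can be trimmed or padded to exactly $k$ edges. Here the hypotheses pay off: $s\le 2k-1$ leaves room beneath the budget of $k$ edges for a padding edge, and $k\ge\nu/4$, i.e. $\sum_i|C_i|\le 3k-1$, keeps the components small enough to control the deficiency. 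Granting the case analysis, every branch produces a matching of size $k$ together with a Tutte obstruction in its complement, contradicting $k$-extendability; hence $\kappa(G)\ge 2k$.
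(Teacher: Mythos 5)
A preliminary point: the paper does not prove Theorem \ref{theorem:LY1} at all; it quotes it from Lou and Yu \cite{LY1}, where the proof occupies essentially the entire article. So your proposal must stand on its own merits, and on those merits it has genuine gaps. Your skeleton is indeed the natural (and, in outline, the correct) one: take a minimum cut $S$ with $k+1\le |S|\le 2k-1$ via Theorem \ref{theorem:P1}, note every vertex of $S$ has a neighbour in every component, observe that an odd cycle must contain an edge inside $S$ or an edge inside some component (a cycle alternating between $S$ and $V(G)\setminus S$ is even), and for each such edge manufacture a matching of size $k$ whose removal leaves a Tutte obstruction. But the proposal stops exactly where the theorem begins. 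Everything of substance is deferred in the phrase ``granting the case analysis'': you never exhibit the required matchings, and their existence is the hard part. In particular, your appeal to ``a defect version of Hall's condition, any failure of which is itself a bipartite-type separation'' is an assertion, not an argument --- a Hall violator in the auxiliary bipartite graph between $S$ and the components is neither visibly a contradiction to $k$-extendability nor visibly a certificate that $G$ is bipartite.

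Worse, one of your two tie-breaking mechanisms is wrong as stated. You claim that an ``odd-length segment inside some component can be threaded into the matching so that an odd number of that component's vertices are covered, flipping its residual parity.'' It cannot: every matching edge lying inside a component $C_i$ covers exactly two vertices of $C_i$, so for any matching $N$ the number of covered vertices of $C_i$ is $2c_i+b_i$, where $c_i$ counts edges of $N$ inside $C_i$ and $b_i$ counts edges of $N$ joining $S$ to $C_i$. Hence the parity of $|C_i\setminus V(N)|$ is determined by $|C_i|$ and $b_i$ alone; internal edges or paths of $C_i$, of whatever length, never flip it. The only way to change that parity is to change $b_i$, and doing so simultaneously changes $|X|=|S\setminus V(N)|$ by one --- which is precisely the tie you were trying to break. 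Consequently, in the case where the odd cycle supplies only an edge inside a component (and $S$ spans no edge), your proposal has no working mechanism at all, and this is exactly the case that consumes most of the effort in the actual proof in \cite{LY1}. What an internal component edge really buys is different (e.g., budget for padding the matching to size exactly $k$, or absorbing component vertices to starve the uncovered part of $S$), and turning that into a contradiction requires the counting and structural case analysis you have omitted.
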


It is easy to verify that a $2k$-factor-critical graph is always
$k$-extendable, while the converse does not hold generally.
However, taking $n=2k$ in Theorem \ref{theorem:F1} and comparing
it with Theorem \ref{theorem:LY1} and Theorem \ref{theorem:P1}, we
find that the connectivity of a non-bipartitie $k$-extendable
graph increases greatly, when $k\geq \nu(G)/4$, and becomes
comparable to that of a $2k$-factor-critical graph. This fact has
motivated the authors to study the relation between non-bipartite
$k$-extendable graphs and $2k$-factor-critical graphs when $k\geq
\nu(G)/4$, and find the following theorem.
\begin{theorem} \label{theorem:ZWL1}
(Zhang et al.  \cite{ZWL1}). If $k\geq(\nu(G)+2)/4$, then a
non-bipartite graph $G$ is $k$-extendable if and only if it is
$2k$-factor-critical.
\end{theorem}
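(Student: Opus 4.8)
The plan is to prove the biconditional in Theorem~\ref{theorem:ZWL1} by handling the two implications separately, and the easy direction first. Since any $2k$-factor-critical graph is $k$-extendable (as noted in the excerpt), only the forward direction requires work: assuming $G$ is a non-bipartite $k$-extendable graph with $k\geq(\nu(G)+2)/4$, I want to show $G$ is $2k$-factor-critical. To do this I would take an arbitrary set $S\subseteq V(G)$ with $|S|=2k$ and show that $G-S$ has a perfect matching, for which I would invoke Tutte's theorem in the form of the Berge--Tutte deficiency condition: it suffices to show $o(G-S-T)\leq|T|$ for every $T\subseteq V(G-S)$, equivalently $o(G-U)\leq|U|-2k$ is not violated — more precisely, that $G-S$ has no Tutte obstruction.

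First I would record the parity and order bookkeeping: $\nu(G)$ must be even (a $k$-extendable graph has a perfect matching), and $\nu(G-S)=\nu(G)-2k$ is even, so a perfect matching of $G-S$ is the right target. The key structural input is Theorem~\ref{theorem:LY1}: since $G$ is non-bipartite and $k\geq\nu/4$, we get $\kappa(G)\geq 2k$. I expect to combine this high connectivity with the extendability hypothesis to control the Tutte sets. The natural strategy is to argue by contradiction: suppose some $U\subseteq V(G)$ witnesses that $G-S$ fails to have a perfect matching, i.e. a set $T$ with $o((G-S)-T)\geq|T|+2$ (parity forces the deficiency to jump by at least $2$). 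Writing $W=S\cup T$, this says $o(G-W)\geq|W|-2k+2$, and since $|W|\geq|S|=2k$ this means $G-W$ has at least $|W|-2k+2\geq 2$ odd components.

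The crux is then to derive a contradiction from the existence of a vertex cut $W$ with many odd components, using $k$-extendability. The standard technique here, which I expect to be the main obstacle, is to build a large matching that saturates the small components and a carefully chosen part of the cut, extend it using $k$-extendability, and count edges to reach a contradiction with the fact that each odd component, having an unmatched vertex, forces too many edges into $W$ or violates a degree/connectivity bound. Concretely I would try to select $k$ independent edges among the components of $G-W$ and between $W$ and the components so that, after extension to a perfect matching of $G$, each odd component $C_i$ of $G-W$ must send at least one matching edge to $W$; counting these forced edges against $|W|$ and using $\kappa(G)\geq 2k$ together with the inequality $k\geq(\nu+2)/4$ (which bounds $|W|$ and the total number of components from above) should yield the contradiction. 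The delicate point is choosing the initial matching of size exactly $k$ so that its extension is forced to behave badly, and making the double count tight enough that the threshold $(\nu+2)/4$ — rather than the weaker $\nu/4$ of Theorem~\ref{theorem:LY1} — is exactly what is needed; I anticipate that the extra $+2$ in the hypothesis is precisely what closes the parity gap $o(G-W)\geq|W|-2k+2$.

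For the reverse direction, I would simply cite the elementary observation that $2k$-factor-criticality implies $k$-extendability: given a matching $M$ of size $k$, delete its $2k$ endpoints to obtain a perfect matching of the remaining graph, whose union with $M$ is a perfect matching of $G$ containing $M$; non-bipartiteness is automatic since factor-critical graphs of the relevant type contain odd structure. Thus the entire difficulty is concentrated in the forward implication and, within it, in the edge-counting contradiction driven by Theorem~\ref{theorem:LY1} and the sharp threshold.
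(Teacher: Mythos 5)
A preliminary remark: the paper you were given does not actually prove Theorem~\ref{theorem:ZWL1} --- it imports it from \cite{ZWL1} --- so the closest in-paper comparison is the proof of Theorem~\ref{theorem:ext_crt_4k}, which handles the boundary case $\nu(G)=4k$ with the same circle of ideas. Judged on its own merits, your proposal has the right skeleton but a genuine gap where the theorem's content should be. The easy direction and the setup of the hard direction are fine: supposing $G$ is not $2k$-factor-critical, you correctly produce a set $W\supseteq S$ with $|W|\geq 2k$ and $q:=o(G-W)\geq |W|-2k+2$, and you correctly import $\kappa(G)\geq 2k$ from Theorem~\ref{theorem:LY1}. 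But the one concrete mechanism you offer for the contradiction --- ``select $k$ independent edges among the components of $G-W$ and between $W$ and the components,'' extend to a perfect matching, and count the edges that odd components are forced to send into $W$ --- provably cannot work. If the matching $M$ of size $k$ uses only edges inside components and crossing edges, then after extension the forced edges together with the crossing edges of $M$ occupy at most $q+k$ distinct vertices of $W$; since $q\leq\alpha(G)\leq \nu/2-k\leq k-1$ by Theorem~\ref{theorem:MV1} (this is where $\nu\leq 4k-2$ enters) and $|W|\geq 2k$, we get $q+k\leq 2k-1<|W|$, so the count never exceeds the capacity of $W$, no matter how cleverly the edges are chosen. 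Edges placed inside components of $G-W$ contribute nothing to the count, and crossing edges at best break even.

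The missing idea is that the initial matching must be concentrated \emph{inside} $G[S]$: each such edge removes two vertices of $S$ from play while leaving every odd component of $G-S$ intact, so a matching of size $k$ in $G[S]$, extended to a perfect matching, would force $q\leq |S|-2k$, contradicting $q\geq |S|-2k+2$. Even this, however, only yields $\alpha^\prime(G[S])\leq k-1$, which is not yet absurd; the real work begins there. In the paper's treatment of the case $\nu=4k$ this is precisely the point where $S$ is chosen to maximize $\alpha^\prime(G[S])$, the Gallai--Edmonds decomposition (Lemma~\ref{lemma:GE1}) is applied to both $G[S]$ and $G-S$, and connectivity together with degree counting eliminates the sets $A$ and $C$; for $\nu\leq 4k-2$ an analogous analysis must be driven all the way to a contradiction rather than to the exceptional structure of Theorem~\ref{theorem:ext_crt_4k}(3). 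None of that machinery appears in your proposal, and the hope that ``the extra $+2$ closes the parity gap'' is not a substitute for it: the $+2$ actually enters through the bound $\alpha(G)\leq k-1$, not through parity. In short: correct reduction, but the engine of the proof is absent, and the engine is the theorem.
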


In this paper, we handle the unsettled case that $k=\nu(G)/4$.
Precisely, we give a sufficient and necessary condition for a
$k$-extendable graph with $k=\nu(G)/4$ to be $2k$-factor-critical.

In the rest of the paper we study the relationships between
independence number, factor-criticality and extendibility. Some
existing results are summarized in the following theorems.

\begin{theorem} \label{theorem:MV1}
(Maschlanka and Volkmann \cite{MV1}). Let $G$ be an $k$-extendable
non-bipartite graph. Then $\alpha(G)\leq \nu/2-k$. Moreover, the
upper bound for $\alpha(G)$ is sharp for all $k$ and $\nu$.
\end{theorem}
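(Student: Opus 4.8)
The plan is to prove the equivalent statement that $G-I$ contains a matching of size $k$, where $I$ is a maximum independent set and $B:=V(G)\setminus I$. Once such a matching $M_0\subseteq E(G[B])$ is found, $k$-extendability extends it to a perfect matching $M\supseteq M_0$; since $I$ is independent, every vertex of $I$ is matched by $M$ into $B$, and in fact into $B\setminus V(M_0)$, because the $2k$ vertices of $V(M_0)$ are already matched among themselves by $M_0\subseteq M$. As $M$ is injective this yields $\alpha=|I|\le |B|-2k=\nu-\alpha-2k$, i.e. $\alpha\le\nu/2-k$. (Conversely $\alpha\le\nu/2-k$ forces $\alpha'(G[B])\ge\nu/2-\alpha\ge k$, so the two assertions are equivalent; this also explains why the bound must fail for bipartite $G$, where $G[B]$ has no edge at all.)

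To produce the size-$k$ matching I would induct on $k$. For the base case $k=1$, non-bipartiteness guarantees that $B$ is not independent (otherwise $V(G)=I\cup B$ would be a bipartition), so $G[B]$ has an edge and the count above gives $\alpha\le\nu/2-1$. For the inductive step I rely on the \emph{deletability} fact that if $G$ is $k$-extendable and $uv\in E(G)$ then $G-u-v$ is $(k-1)$-extendable: given a $(k-1)$-matching $N$ of $G-u-v$, the set $N\cup\{uv\}$ is a $k$-matching of $G$, and its extension to a perfect matching, minus $uv$, is a perfect matching of $G-u-v$ containing $N$ (here $\kappa(G)\ge k+1$ from Theorem~\ref{theorem:P1} keeps $G-u-v$ connected). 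The strategy is then to choose a maximum independent set $I$ together with an edge $uv$ having both endpoints in $B$ such that $G-u-v$ remains non-bipartite. Then $G-u-v$ is non-bipartite and $(k-1)$-extendable, so by induction $\alpha(G-u-v)\le (\nu-2)/2-(k-1)=\nu/2-k$; and since $u,v\notin I$, the set $I$ is still independent in $G-u-v$, giving $\alpha(G)=|I|\le\alpha(G-u-v)\le\nu/2-k$.

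The hard part is exactly the existence of such an edge, i.e. an edge inside $B$ whose two endpoints can be deleted without destroying non-bipartiteness. That $B$ contains an edge is immediate, and in fact every odd cycle of $G$ contains an edge lying inside $B$, since an odd cycle cannot alternate perfectly between the independent set $I$ and $B$ and has no edge inside $I$. The genuine obstruction is the \emph{almost-bipartite} configuration, in which a maximum matching $M_B$ of $G[B]$ has size $m\le k-1$, the set $U$ of $M_B$-exposed vertices is independent with $|U|=\alpha$ (so $U$ is a second maximum independent set), and $G-V(M_B)$ is a balanced bipartite graph with parts $I,U$ while $|V(M_B)|=2m<2k$; here all the odd structure is concentrated on the few vertices of $V(M_B)$, and deleting the endpoints of a $B$-edge may flatten $G$ to a bipartite graph. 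To defeat this case I would exploit an odd cycle supplied by non-bipartiteness, which must meet $V(M_B)$, together with $\kappa(G)\ge k+1$: such a cycle should yield, through an alternating/augmenting argument against $M_B$, either a larger matching inside $G[B]$ (contradicting $m\le k-1$) or a size-$k$ matching $M^\ast$ of $G$ for which $G-V(M^\ast)$ is bipartite with unequal sides and hence has no perfect matching (contradicting $k$-extendability). Making this dichotomy airtight, in particular guaranteeing the auxiliary edges used to build $M^\ast$, is the principal obstacle.

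Finally, sharpness is witnessed by the complete split graph $G=\overline{K_{m}}\vee K_{m+2k}$ with $m=\nu/2-k\ge 1$. Its clique part of size $m+2k\ge 3$ makes it non-bipartite; a maximum independent set is the set of $m$ pairwise non-adjacent vertices, so $\alpha(G)=m=\nu/2-k$; and a short check shows $k$-extendability: any $k$-matching splits into $a$ join-edges and $k-a$ clique-edges, leaving an independent side of size $m-a$ against a clique side of size $m+a$, and since $m+a\ge m-a$ the remainder $\overline{K_{m-a}}\vee K_{m+a}$ always has a perfect matching. This realizes equality for all admissible $k$ and even $\nu$.
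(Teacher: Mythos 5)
This theorem is one the paper quotes from Maschlanka and Volkmann \cite{MV1}; the paper contains no proof of it, so your attempt can only be judged on its own merits.

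Much of your proposal is sound: the equivalence between $\alpha(G)\le \nu/2-k$ and $\alpha^\prime(G[B])\ge k$ (for $B=V(G)\setminus I$, $I$ a maximum independent set) is correct, as are the base case $k=1$, the deletability fact that $G-u-v$ is $(k-1)$-extendable for any edge $uv$ when $k\ge 2$, and the sharpness example $\overline{K_{\nu/2-k}}\vee K_{\nu/2+k}$. However, there is a genuine gap at the crux, and you name it yourself: the inductive step requires an edge $uv$ with both ends in $B$ such that $G-u-v$ remains non-bipartite, and this is never established. This is not a residual technicality. The ``almost-bipartite configuration'' you describe --- a maximum matching $M_B$ of $G[B]$ of size $m\le k-1$, with $U=B\setminus V(M_B)$ independent and $G-V(M_B)$ bipartite with parts $I$ and $U$ --- is, via your own equivalence, exactly the negation of the theorem's conclusion. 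So ``defeating this case'' is not one troublesome subcase of the induction; it \emph{is} the theorem. Your proposed dichotomy (an alternating/augmenting argument against $M_B$ that yields either a larger matching in $G[B]$ or a $k$-matching $M^\ast$ with $G-V(M^\ast)$ having no perfect matching) is only a plan: you do not exhibit the alternating structure, do not show how an odd cycle meeting $V(M_B)$ produces either outcome, and you concede that guaranteeing the auxiliary edges for $M^\ast$ is ``the principal obstacle.'' In its present form the argument risks circularity, since ruling out the bad configuration requires precisely the leverage that non-bipartiteness is supposed to provide and that the theorem encapsulates.

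Note also that the edge-existence claim is delicate even granting the theorem: if $\alpha^\prime(G[B])\ge k\ge 2$, it is still conceivable a priori that every odd cycle of $G$ meets the endpoint pair of every edge of $G[B]$, so the claim does not follow from soft arguments; whatever completes your induction must engage the matching structure and connectivity quantitatively (this is what the actual proof in \cite{MV1} does, working directly with a maximum independent set rather than deleting edges). Until the edge-existence step is proved, the induction does not close.
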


\begin{theorem} \label{theorem:AC1}
(Ananchuen and Caccetta \cite{AC1}). Let $G$ be a graph of even
order $\nu$ and $k$ a positive integer such that $\nu/4 \leq k
\leq \nu/2-2$, $\nu/2-k$ is even and $\delta(G)\geq \nu/2+k-1$.
Then $G$ is $k$-extendable if and only if $\alpha(G)\leq \nu/2-k$.
\end{theorem}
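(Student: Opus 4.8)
The statement is a biconditional, so the plan is to treat the two directions separately; I expect the reverse direction to carry essentially all the difficulty. For the forward direction I would reduce to Theorem~\ref{theorem:MV1}. The only gap is that that theorem requires $G$ to be non-bipartite, so first I would observe that the hypotheses force non-bipartiteness: since $k\ge\nu/4$ we have $\delta(G)\ge\nu/2+k-1\ge 3\nu/4-1$, whereas any bipartite graph on $\nu$ vertices has minimum degree at most $\nu/2$. Because the admissible range $\nu/4\le k\le\nu/2-2$ is nonempty only when $\nu\ge 8$, the inequality $3\nu/4-1>\nu/2$ holds and $G$ cannot be bipartite; then $\alpha(G)\le\nu/2-k$ is immediate from Theorem~\ref{theorem:MV1}.

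For the reverse direction, assume $\alpha(G)\le\nu/2-k$. First I would check that $G$ has a matching of size $k$ at all: since $\delta(G)\ge\nu/2$, $G$ has a perfect matching (e.g.\ via Dirac's theorem), which contains a matching of size $k$. Then I would fix an arbitrary matching $M$ with $|M|=k$ and aim to show $H:=G-V(M)$, which has even order $\nu-2k$, possesses a perfect matching; this is exactly what it means for $M$ to extend. Suppose not. By Tutte's theorem there is a set $S\subseteq V(H)$ with $o(H-S)\ge|S|+2$; I would write $s=|S|$ and $q=\nu/2-k$, and let $C_1,\dots,C_c$ be the odd components of $H-S$, so that $c\ge s+2$.

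The engine of the argument is a lower bound on the size of each odd component coming from the degree hypothesis. Any $v\in C_i$ has all its neighbours inside $C_i$ or in $S\cup V(M)$, so $\delta(G)\le(|C_i|-1)+s+2k$, which rearranges to $|C_i|\ge q-s$. Summing over the $c$ odd components and using that they fit inside the $\nu-2k-s=2q-s$ vertices of $H-S$ gives $c(q-s)\le 2q-s$, hence $c\le 2+s/(q-s)$. Comparing with $c\ge s+2$ forces $s(q-s)\le s$, so, since $0<q-s$ (as $s\le q-1$), either $s=0$ or $s=q-1$. If $s=q-1$, then $c\ge q+1$, and choosing one vertex from each $C_i$ yields an independent set of size $c>q=\nu/2-k$, contradicting the hypothesis. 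If $s=0$, I would invoke the parity assumption: each $C_i$ is odd while $q=\nu/2-k$ is even, so the bound $|C_i|\ge q$ improves to $|C_i|\ge q+1$, whence $2q\ge\sum|C_i|\ge c(q+1)\ge 2(q+1)$, which is absurd. Either way we reach a contradiction, so $H$ has a perfect matching and $M$ extends.

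The hard part will be the reverse direction's extremal analysis: everything hinges on squeezing $s$ into the two boundary values $0$ and $q-1$ and then closing each case, and it is precisely here that the assumption ``$\nu/2-k$ is even'' is indispensable---without it the $s=0$ configuration (two odd components of size $\nu/2-k$ with $S=\emptyset$) would survive and the theorem would fail. I would double-check the parity bookkeeping (namely $o(H-S)\equiv|V(H-S)|\pmod 2$ and the odd-versus-even sizes of the $C_i$) with care, since that is the one place where an off-by-one could silently break the argument.
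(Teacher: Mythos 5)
Your proposal is correct, but it takes a genuinely different route from the paper's treatment. The paper never proves Theorem~\ref{theorem:AC1} from first principles: it quotes it as a known result and then recovers it (in strengthened form) as a special case of the final Corollary, whose proof chains together two equivalences --- $k$-extendibility $\Leftrightarrow$ $2k$-factor-criticality (Theorem~\ref{theorem:ZWL1} for $k\geq(\nu+2)/4$, Theorem~\ref{theorem:ext_crt_4k} for $k=\nu/4$) and $2k$-factor-criticality $\Leftrightarrow$ $\alpha(G)\leq \nu/2-k$ (Theorem~\ref{theorem:ind_eq_crt}, i.e.\ Theorems~\ref{theorem:crt_ind_1} and~\ref{theorem:ind_crt_1} with $n=2k$) --- and that machinery rests on the Gallai--Edmonds theorem and Lemma~\ref{lemma:Y2}; the hypothesis that $\nu/2-k$ is even enters there only to exclude the exceptional graph $G_0\vee(K_{\nu/2-k}\cup K_{\nu/2-k})$. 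Your argument is instead direct and elementary: the forward direction via Theorem~\ref{theorem:MV1} (your observation that $\delta(G)\geq 3\nu/4-1>\nu/2$ rules out bipartiteness is exactly what is needed, since the range of $k$ forces $\nu\geq 8$), and the reverse direction via Tutte's theorem applied to $H=G-V(M)$, where the degree hypothesis forces each odd component of $H-S$ to have at least $q-s$ vertices, the count pins $|S|$ to $0$ or $q-1$, and the independence bound and the parity of $q=\nu/2-k$ kill the two cases; I checked the inequalities ($c(q-s)\leq 2q-s$, $s(q-s-1)\leq 0$, and the two terminal contradictions) and they are sound. Your closing remark is also apt: for odd $q$ the graph $K_{2k}\vee(K_q\cup K_q)$ satisfies all other hypotheses, has $\alpha=2\leq q$, yet a matching of size $k$ inside $K_{2k}$ does not extend, which is precisely your surviving $S=\emptyset$ configuration and precisely the paper's exceptional graph. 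What each approach buys: yours is self-contained, avoids Gallai--Edmonds entirely, and isolates exactly where the parity hypothesis is indispensable; the paper's route yields strictly more, namely the three-way equivalence with $2k$-factor-criticality, the wider range $k\leq\nu/2-1$, and a structural characterization of the failure when $\nu/2-k$ is odd. One cosmetic gap: the paper's definition of $k$-extendable requires $G$ connected, so in the reverse direction you should note that $\delta(G)\geq\nu/2$ gives connectivity (it does, via the same Dirac-type argument you already invoke).
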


Some known results that will be used in our proofs are listed
below.

Let $G$ be any graph. Denote by $D(G)$ the set of vertices in $G$
which are not covered by at least one maximum matching of $G$. Let
$A(G)$ be the set of vertices in $V(G)-D(G)$ adjacent to at least
one vertex in $D(G)$, and $C(G)=V(G)-A(G)-D(G)$.

\begin{lemma} \label{lemma:GE1}
(The Gallai-Edmonds Structure Theorem \cite{LP1}). If $G$ is a
graph and $D(G)$, $A(G)$ and $C(G)$ are defined as above, then
$\newline$(a) the components of the subgraph induced by $D(G)$ are
factor-critical,
$\newline$(b) if $M$ is any maximum matching of $G$, it contains a
near-perfect matching of each component of $D(G)$, a perfect
matching of each component of $C(G)$ and matches all vertices of
$A(G)$ with vertices in distinct components of $D(G)$,
$\newline$(c) $\alpha^\prime(G)=(\nu(G)-o(D(G))+|A(G)|)/2$, where
$o(D(G))$ denotes the number of components of the subgraph induced
by $D(G)$.
\end{lemma}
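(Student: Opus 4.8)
The plan is to reduce everything to two classical ingredients: the Berge--Tutte deficiency formula $\nu(G)-2\alpha'(G)=\max_{S\subseteq V(G)}(o(G-S)-|S|)$, and \emph{Gallai's Lemma}, which asserts that a connected graph $H$ with $D(H)=V(H)$ is factor-critical. The elementary observation underpinning the whole argument is that a vertex $v$ lies in $D(G)$ if and only if $\alpha'(G-v)=\alpha'(G)$: a maximum matching missing $v$ restricts to a maximum matching of $G-v$, and conversely a maximum matching of $G-v$ of that size is a maximum matching of $G$ missing $v$. Consequently, for $v\notin D(G)$ every maximum matching covers $v$ and $\alpha'(G-v)=\alpha'(G)-1$. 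I would first record these facts and then attack the three assertions in turn.

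The heart of the matter, and what I expect to be the main obstacle, is Gallai's Lemma, which I would prove by contradiction via a shortest-path exchange argument. Suppose $H$ is connected with $D(H)=V(H)$ but some maximum matching misses two distinct vertices. Over all maximum matchings $M$ and all pairs $\{u,w\}$ of vertices missed by $M$, choose one minimizing the distance between $u$ and $w$. Since adjacency of $u$ and $w$ would let $M$ be augmented, this distance is at least $2$; pick an internal vertex $t$ of a shortest path joining $u$ and $w$. As $t\in D(H)$, there is a maximum matching $N$ missing $t$, and minimality forces $N$ to cover both $u$ and $w$. Examining the component $P$ of $M\triangle N$ containing $t$, one checks that $P$ is an alternating path from $t$ to one of $u,w$, say $u$, that avoids $w$; then $M\triangle P$ is a maximum matching missing $t$ and $w$ at distance strictly smaller than that of $u$ and $w$, contradicting minimality. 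Hence every maximum matching of $H$ misses at most one vertex, and since $D(H)=V(H)$ each vertex is missed by some such matching, so $H$ is factor-critical.

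For assertion (a) I would fix a component $K$ of the subgraph induced by $D(G)$ and show $D(K)=V(K)$, after which Gallai's Lemma finishes it. Here alternating paths launched from a vertex exposed by a maximum matching are used to show that no edge of any maximum matching joins two distinct components of $D(G)$, and that a maximum matching induces a near-perfect matching inside $K$ missing an arbitrarily prescribed vertex of $K$; this exhibits each vertex of $K$ as inessential within $K$, so $D(K)=V(K)$ and $K$ is factor-critical.

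Assertions (b) and (c) then follow by a counting argument built on the same alternating-path analysis. That analysis shows that every edge of a maximum matching $M$ incident with $A(G)$ runs into $D(G)$, that the $|A(G)|$ endpoints so reached lie in pairwise distinct components of $D(G)$ (equivalently, the bipartite graph obtained by contracting each $D$-component has positive surplus on $A(G)$, so Hall's condition lets $A(G)$ be matched to distinct components), and that each component of $C(G)$ receives a perfect matching of $M$. Each of the $o(D(G))$ factor-critical $D$-components contributes a near-perfect matching and so carries exactly one vertex unmatched within its component; of these, precisely $|A(G)|$ are absorbed by the edges from $A(G)$, leaving $o(D(G))-|A(G)|$ vertices exposed by $M$. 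Since $\nu(G)=2|M|+(o(D(G))-|A(G)|)$, we obtain $\alpha'(G)=|M|=(\nu(G)-o(D(G))+|A(G)|)/2$, which is (c); simultaneously $A(G)$ is seen to be a barrier attaining the Berge--Tutte maximum, confirming the matching structure asserted in (b).
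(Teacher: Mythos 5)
First, note that the paper contains no proof of this lemma: it is the Gallai--Edmonds Structure Theorem, imported wholesale from Lov\'{a}sz and Plummer \cite{LP1}, so your attempt can only be judged against the classical proof. Your architecture is the standard one (Tutte--Berge formula, Gallai's Lemma, an alternating-path ``stability'' analysis, then counting), and the piece you single out as the main obstacle, Gallai's Lemma, you prove essentially correctly. One local error there: the component $P$ of $M\triangle N$ containing $t$ need not end at $u$ or $w$; its other endpoint is merely some $M$-exposed vertex $z$. This is harmless --- if $z\notin\{u,w\}$ then $M\triangle P$ is a maximum matching missing $t$, $u$ and $w$ simultaneously, and the pair $\{t,u\}$ already contradicts minimality, while if $z\in\{u,w\}$ your argument applies verbatim --- but the sentence ``one checks that $P$ is an alternating path from $t$ to one of $u,w$'' is false as stated and must be replaced by this case distinction.

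The genuine gap is in (a) and (b). Every structural claim there is introduced by ``one checks'' or ``that analysis shows'': that a maximum matching of $G$ never matches a vertex of $A(G)$ to a vertex of $A(G)\cup C(G)$, that distinct vertices of $A(G)$ are matched into distinct components of $G[D(G)]$ (positive surplus), and that any maximum matching induces a near-perfect matching on each component $K$ of $G[D(G)]$, missing a prescribed vertex of $K$ when the matching is suitably chosen. None of these follows from Gallai's Lemma or from the Tutte--Berge formula by a routine argument; they \emph{are} statements (a) and (b), i.e., the substance of the theorem, and the alternating-path analysis that is supposed to deliver them is never carried out (note also the circularity: (b) is said to follow from ``the same alternating-path analysis'' as (a), but for (a) that analysis was itself only asserted). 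The standard way to fill the hole is the stability lemma: for $a\in A(G)$ one proves $D(G-a)=D(G)$, $A(G-a)=A(G)\setminus\{a\}$, $C(G-a)=C(G)$ and $\alpha'(G-a)=\alpha'(G)-1$; iterating over all of $A(G)$ reduces everything to $G-A(G)$, whose components are exactly those of $G[D(G)]$ and $G[C(G)]$, where Gallai's Lemma applies componentwise, and your counting for (c) then goes through. Of these assertions, $D(G)\subseteq D(G-a)$ and $\alpha'(G-a)=\alpha'(G)-1$ are easy, but the reverse inclusion $D(G-a)\subseteq D(G)$ is the technically hard step, and nothing in your proposal engages with it. As written, your text proves one of the two pillars of the theorem and outlines, but does not prove, the other.
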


\begin{lemma} \label{lemma:Y1}
(Yu \cite{Y1}). A graph $G$ of odd order is
$k\frac{1}{2}$-extendable if and only if $G \vee K_1$ is
$(k+1)$-extendable.
\end{lemma}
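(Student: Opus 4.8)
The plan is to exploit the natural correspondence between perfect matchings of $H := G\vee K_1$ and the near-perfect matchings of $G$. Write $w$ for the vertex of $K_1$, so that $w$ is adjacent to every vertex of $G$ and $\nu(H)=\nu(G)+1$ is even. The basic dictionary is this: in any perfect matching of $H$ the apex $w$ is matched to some $v\in V(G)$, and deleting that edge leaves a perfect matching of $G-v$; conversely a perfect matching of $G-v$ together with the edge $wv$ is a perfect matching of $H$. Likewise a matching $N$ of size $k$ in $G-v$ corresponds to the matching $N\cup\{wv\}$ of size $k+1$ in $H$. I would prove the two implications by passing back and forth through this dictionary, so that condition (2) in the definition of $k\frac{1}{2}$-extendability and the extension property of a $(k+1)$-extendable graph become essentially the same statement.

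For the direction ``$H$ is $(k+1)$-extendable $\Rightarrow$ $G$ is $k\frac{1}{2}$-extendable'', I would first verify condition (2): given $v$ and a matching $N$ of size $k$ in $G-v$, the matching $N\cup\{wv\}$ has size $k+1$ in $H$, hence extends to a perfect matching $M$ of $H$; since $wv\in M$, the set $M\setminus\{wv\}$ is a perfect matching of $G-v$ containing $N$. For condition (1) I would take a perfect matching $M_0$ of $H$ (which exists because a $(k+1)$-extendable graph contains, and hence extends, a matching of size $k+1$); after a small local modification around $v$ and the $M_0$-partners of $v$ and of $w$, one extracts a matching of size $k+1$ of $H$ through the edge $wv$, and deleting $wv$ leaves a matching of size $k$ in $G-v$. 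The size inequality $k+1\le \nu(H)/2-1$ guarantees there is room for this.

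The reverse direction ``$G$ is $k\frac{1}{2}$-extendable $\Rightarrow$ $H$ is $(k+1)$-extendable'' is where the real work lies. Existence of a matching of size $k+1$ in $H$ is immediate from condition (1): take a size-$k$ matching in some $G-v$ and append $wv$. The substance is to extend an arbitrary matching $N$ of size $k+1$ of $H$ to a perfect matching. If $N$ covers $w$, say $wv\in N$, then $N\setminus\{wv\}$ is a matching of size $k$ in $G-v$, which condition (2) extends to a perfect matching of $G-v$; restoring $wv$ finishes this case.

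The main obstacle is the case where $N$ avoids $w$, i.e. $N$ is a matching of size $k+1$ lying entirely inside $G$. Here one cannot simply invoke condition (2), which speaks only of matchings of size exactly $k$; moreover the apex $w$ must then be matched to a vertex of $G$ left uncovered by $N$, and a naive count shows this seems to force a perfect matching on the subgraph induced by the uncovered vertices, which $k\frac{1}{2}$-extendability does not obviously supply. My plan to overcome this is an alternating-swap argument: pick any edge $xy\in N$, so that $N\setminus\{xy\}$ is a matching of size $k$ in $G-x$; extend it by condition (2) to a perfect matching $P$ of $G-x$, let $yy'$ be the edge of $P$ incident with $y$, and replace $yy'$ by $xy$. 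Because $N\setminus\{xy\}$ avoids $y$, the swap destroys no edge of $N$, and $(P\setminus\{yy'\})\cup\{xy\}$ is a perfect matching of $G-y'$ containing $N$. Appending the edge $wy'$ then yields a perfect matching of $H$ containing $N$. The delicate point to verify is precisely that $yy'\notin N\setminus\{xy\}$, so that the swap preserves all of $N$; once that is in hand, both implications are complete.
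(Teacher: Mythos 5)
The paper gives no proof of this lemma at all --- it is quoted from Yu \cite{Y1} as a known result --- so there is no internal argument to compare against; your proposal has to stand on its own, and it does. Both directions are sound. The apex dictionary (perfect matchings of $G\vee K_1$ through $w$ correspond to near-perfect matchings of $G$) correctly handles condition (2), condition (1) via the counting you indicate, and the case where the size-$(k+1)$ matching $N$ covers $w$. The genuinely nontrivial case, where $N$ avoids $w$, is disposed of correctly by your swap: since $N\setminus\{xy\}$ misses $y$, the edge $yy'$ of the perfect matching $P$ of $G-x$ incident with $y$ cannot lie in $N\setminus\{xy\}$ (that set covers neither $x$ nor $y$), so $(P\setminus\{yy'\})\cup\{xy\}$ is a matching containing all of $N$ and covering every vertex of $G$ except $y'$; appending $wy'$ gives the required perfect matching of $G\vee K_1$. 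Note also $y'\neq x$ automatically, since $P\subseteq E(G-x)$. One small definitional point you leave unaddressed: in this paper $k\frac{1}{2}$-extendability requires $G$ to be \emph{connected}, so in the direction ``$G\vee K_1$ is $(k+1)$-extendable $\Rightarrow$ $G$ is $k\frac{1}{2}$-extendable'' you should also verify connectivity of $G$. This is easily patched: by Theorem \ref{theorem:P1}, $\kappa(G\vee K_1)\geq k+2\geq 2$, and deleting the single vertex $w$ from a $2$-connected graph leaves $G=(G\vee K_1)-w$ connected. With that one-line addition your proof is complete.
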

\begin{lemma} \label{lemma:Y2}
(Yu \cite{Y1}, Favaron \cite{F1}). A graph $G$ is
$n$-factor-critical if and only if $\nu(G)\equiv n\ (mod\ 2)$ and
for any vertex set $S\subseteq V(G)$ with $|S|\geq n$, $o(G-S)\leq
|S|-n$.
\end{lemma}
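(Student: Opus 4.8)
The plan is to reduce both directions of the equivalence to Tutte's $1$-Factor Theorem, namely that a graph $H$ has a perfect matching if and only if $o(H-T)\le |T|$ for every $T\subseteq V(H)$. I will repeatedly invoke the elementary necessity half of this theorem: if $H$ possesses a perfect matching $M$, then $o(H-T)\le |T|$ for all $T$, because in each odd component of $H-T$ some vertex must be matched by $M$ across the boundary of that component, hence into $T$, and distinct odd components consume distinct vertices of $T$.

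For the forward direction, assume $G$ is $n$-factor-critical. Choosing any $S$ with $|S|=n$ yields a perfect matching of $G-S$, so $\nu(G)-n$ is even, which is the parity condition $\nu(G)\equiv n \pmod 2$. To obtain the inequality, I fix $S\subseteq V(G)$ with $|S|\ge n$ and select a subset $S'\subseteq S$ of size exactly $n$. By factor-criticality $G-S'$ has a perfect matching, so applying the necessity half of Tutte's theorem to $H=G-S'$ with $T=S\setminus S'$ gives $o\big((G-S')-(S\setminus S')\big)\le |S\setminus S'|=|S|-n$. Since $(G-S')-(S\setminus S')=G-S$, this is exactly $o(G-S)\le |S|-n$.

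For the converse, assume the parity and inequality conditions hold, and fix an arbitrary $S_0\subseteq V(G)$ with $|S_0|=n$; I must show $G-S_0$ has a perfect matching. The parity condition makes $\nu(G-S_0)=\nu(G)-n$ even, and I verify Tutte's condition on $G-S_0$ directly: for any $T\subseteq V(G)\setminus S_0$, since $T$ is disjoint from $S_0$ the set $S_0\cup T$ has size $n+|T|\ge n$, so the hypothesis applied with $S=S_0\cup T$ gives $o(G-(S_0\cup T))\le (n+|T|)-n=|T|$. As $G-(S_0\cup T)=(G-S_0)-T$, Tutte's condition holds for every $T$, and by the sufficiency half of Tutte's theorem $G-S_0$ has a perfect matching. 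Since $S_0$ was arbitrary, $G$ is $n$-factor-critical.

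The only genuinely delicate point, beyond the routine size bookkeeping, is the bridge between deleting vertices from $G$ and deleting them from the reduced graphs $G-S'$ and $G-S_0$: one must choose the auxiliary set ($S'$ in one direction, $S_0\cup T$ in the other) with \emph{exactly} the cardinality that converts the available inequality into the desired one, and check that the two vertex-deletions genuinely produce the same graph. I would also remark that the inequality condition with $|S|=n$ already forces $o(G-S)\le 0$, so the parity hypothesis is in fact implied and is retained only for transparency.
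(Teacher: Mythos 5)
Your proof is correct: both directions are handled completely, the cardinality bookkeeping ($S'\subseteq S$ of size exactly $n$ in one direction, $S_0\cup T$ in the other) is exactly right, and the identification of the deleted-vertex graphs is valid; your closing remark that the case $|S|=n$ of the inequality already forces $o(G-S)\le 0$ and hence the parity condition is also accurate. The paper itself states this lemma as a quoted result of Yu and Favaron and gives no proof, so there is nothing internal to compare against; your reduction to Tutte's $1$-Factor Theorem is the standard argument used in those sources, and it stands on its own.
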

\begin{lemma} \label{lemma:Z1}
(Zhang et al. \cite{ZWL1}). If $G$ is a $k$-extendable graph, then
$G$ is also $m$-extendable for all integers $0\leq m\leq k$.
\end{lemma}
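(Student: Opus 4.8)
The plan is to prove the single-step statement that a $k$-extendable graph $G$ is $(k-1)$-extendable, and then obtain the whole range $0\le m\le k$ by downward induction on $k-m$. The definitional side conditions come for free: a $k$-extendable graph has a perfect matching (so $\nu(G)$ is even and $\nu(G)\ge 2k+2$) and hence matchings of every size up to $\nu/2$, giving a matching of size $m$; and $m\le k\le(\nu-2)/2$ keeps us in the admissible range. So the entire content is the reduction $k\Rightarrow k-1$. For that, let $M$ be an arbitrary matching of size $k-1$ and set $U=V(G)\setminus V(M)$, so $|U|=\nu-2k+2\ge 4$. The easy case is when $G[U]$ contains an edge $e$: then $M\cup\{e\}$ is a matching of size $k$, which by $k$-extendability lies in a perfect matching $F$, and $M\subseteq M\cup\{e\}\subseteq F$ finishes. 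Thus everything reduces to ruling out that $U$ is independent.

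So I would argue by contradiction, assuming $U$ is independent. By Theorem \ref{theorem:P1}, $\delta(G)\ge\kappa(G)\ge k+1$, and since $U$ is independent every neighbour of a vertex of $U$ lies in $V(M)$; hence $e(U,V(M))=\sum_{u\in U}\deg_G(u)\ge(k+1)|U|$. Call an edge $xy\in M$ \emph{good} if there exist two \emph{distinct} $u_1,u_2\in U$ with $u_1\sim x$ and $u_2\sim y$. If no edge of $M$ were good, then for each $xy\in M$ one has $e(U,\{x,y\})\le|U|$ (either one endpoint has no neighbour in $U$, or both endpoints send all their $U$-edges to a single common vertex, giving at most $2\le|U|$). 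Summing over the $k-1$ edges of $M$ yields $e(U,V(M))\le(k-1)|U|$, contradicting the lower bound $(k+1)|U|$ since $|U|>0$. Hence some edge $x_1y_1\in M$ is good.

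Given a good edge, I would \emph{reroute}: replace $x_1y_1$ by the two edges $x_1u_1$ and $y_1u_2$, producing a matching $N$ of size $k$ whose exposed set is $U\setminus\{u_1,u_2\}$, a nonempty subset (of size $\ge 2$) of the independent set $U$. By $k$-extendability, $N$ extends to a perfect matching $F'$; since $N\subseteq F'$ already saturates $V(M)\cup\{u_1,u_2\}$, the vertices of $U\setminus\{u_1,u_2\}$ must be matched among themselves in $F'$ — impossible inside an independent set. This contradiction shows $G[U]$ has an edge, completing the reduction, and the induction then delivers $m$-extendability for all $0\le m\le k$.

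The delicate point, and the step I expect to be the main obstacle, is precisely the situation where the exposed set $U$ is independent: there one cannot simply enlarge $M$ by appending an edge inside $U$. The remedy is the rerouting move, swapping a single matching edge $x_1y_1$ for $x_1u_1,y_1u_2$ to raise the matching to size $k$ while keeping the new exposed set inside the independent set $U$; and the counting bound coming from $\kappa(G)\ge k+1$ is exactly what guarantees the required good edge $x_1y_1$ exists.
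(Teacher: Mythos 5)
Your proof is correct, but note first that the paper offers no proof of this lemma at all---it is quoted from \cite{ZWL1}---so the comparison can only be with the standard argument from the literature. Your reduction to the single step ``$k$-extendable implies $(k-1)$-extendable'' plus downward induction is sound, and in the hard case where the exposed set $U$ is independent (with $|U|=\nu-2k+2\ge 4$) your three ingredients fit together correctly: every edge leaving $U$ ends in $V(M)$, so $e(U,V(M))\ge (k+1)|U|$ by Theorem \ref{theorem:P1} together with $\delta(G)\ge\kappa(G)$; if no edge of $M$ were good, the per-edge bound $e(U,\{x,y\})\le |U|$ would cap the total at $(k-1)|U|$, a contradiction; and rerouting along a good edge produces a matching of size $k$ whose exposed set is a nonempty (size $|U|-2\ge 2$) subset of the independent set $U$, which no perfect matching containing that matching could saturate.

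Your route is, however, genuinely different from the classical one that \cite{ZWL1} and Plummer's \cite{P1} rest on, which is more elementary: given a matching $M$ of size $k-1$, choose a perfect matching $F$ maximizing $|M\cap F|$; if $M\not\subseteq F$, then since $|F|-|M|\ge 2$ the symmetric difference $M\triangle F$ contains at least two alternating paths whose end-edges lie in $F$; a single-edge path extends $M$ at once, and otherwise switching $M$ along one such path yields a matching of size $k$ whose perfect-matching extension agrees with $M$ on strictly more edges than $F$ does, a contradiction. That argument uses no connectivity input at all. What your approach buys is a short, purely counting-based argument; what it costs is dependence on Theorem \ref{theorem:P1}, a strictly deeper structural fact. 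Inside this paper that dependence is legitimate, since Theorem \ref{theorem:P1} is stated as a known result before the lemma; but as a standalone proof it risks circularity, because developments of the theory typically establish the monotonicity statement before, and sometimes on the way to, the connectivity bound.
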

\section{Extendibility and factor-criticality}
\begin{theorem} \label{theorem:ext_crt_4k}
Let $G$ be a non-bipartite $k$-extendable graph with $\nu(G)=4k$,
then
$\newline$(1) if $\delta(G)\geq 3k$, $G$ is $2k$-factor-critical,
$\newline$(2) if $\delta(G)=2k$, $G$ is not $2k$-factor-critical,
$\newline$(3) if $2k+1\leq \delta(G) \leq 3k-1$, then $G$ is not
$2k$-factor-critical if and only if there exists a partition of
$V(G)$ into $V_1$ and $V_2$, where $|V_1|=|V_2|=2k$. Each of
$G[V_1]$ and $G[V_2]$ is composed of two factor-critical
components of size no less than 3.
\end{theorem}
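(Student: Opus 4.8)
The plan is to run everything through Lemma~\ref{lemma:Y2}, which lets me replace ``$G$ is $2k$-factor-critical'' by the purely combinatorial condition $o(G-S)\le |S|-2k$ for every $S$ with $|S|\ge 2k$ (the parity clause $\nu\equiv 2k\pmod 2$ being automatic since $\nu=4k$). A \emph{violating} set $S$ thus satisfies $o(G-S)\ge |S|-2k+1$; and since $o(G-S)\equiv \nu-|S|\equiv |S|\pmod 2$ has the opposite parity to $|S|-2k+1$, I may always sharpen this to $o(G-S)\ge |S|-2k+2$. This parity upgrade is what makes the counting in parts (1) and (2) tight.

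For (2) I would simply exhibit a violating set: if $\deg(v)=\delta(G)=2k$, take $S=N(v)$, so that $v$ is isolated in $G-S$ and $o(G-S)\ge 1>0=|S|-2k$. Part (1) is a counting argument on a hypothetical violating set $S$ with $|S|=2k+t$; writing $c=o(G-S)\ge t+2$ for its odd components $C_1,\dots,C_c$, any $v\in C_i$ has all neighbours in $C_i\cup S$, whence $\delta(G)\le |C_i|-1+|S|$ and so $|C_i|\ge \delta(G)-|S|+1\ge k-t+1$ once $\delta(G)\ge 3k$. Summing over the at least $t+2$ odd components and using $\sum_i|C_i|\le \nu-|S|=2k-t$ gives $(t+2)(k-t+1)\le 2k-t$, i.e.\ $t(t-k)\ge 2$, which is impossible for $0\le t\le k-1$ (the range $t\le k-1$ coming from $o(G-S)\le \nu-|S|$). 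Hence no violating set exists and $G$ is $2k$-factor-critical.

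For (3) the backbone is a reformulation I would establish first. A $k$-matching $M$ extends to a perfect matching of $G$ exactly when $G-V(M)$ has one, and $|V(M)|=2k$; conversely every $2k$-set $U$ for which $G[U]$ contains a perfect matching is such a $V(M)$, so $k$-extendability forces $G-U$ to have a perfect matching. Combining this with the definition of $2k$-factor-criticality, $G$ fails to be $2k$-factor-critical if and only if there is a partition $V(G)=V_1\cup V_2$ with $|V_1|=|V_2|=2k$ such that \emph{both} $G[V_1]$ and $G[V_2]$ have no perfect matching. The reverse implication of (3) is then immediate, since two odd (factor-critical) components in each $G[V_i]$ preclude a perfect matching there.

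The real work is to upgrade an arbitrary such balanced partition to the canonical one. For each $i$ I would apply the Gallai--Edmonds decomposition (Lemma~\ref{lemma:GE1}) to $G[V_i]$, getting $D_i,A_i,C_i$ with $o(D_i)\ge |A_i|+2$ and factor-critical components inside $D_i$. Using $\delta(G)\ge 2k+1$ together with $\kappa(G)\ge 2k$ (Theorem~\ref{theorem:LY1}, valid as $G$ is non-bipartite with $k=\nu/4$), I would argue $A_i=C_i=\emptyset$, so that $G[V_i]$ is precisely a disjoint union of factor-critical components; no such component can be a single vertex, since an isolated vertex of $V_i$ would have degree at most $2k<\delta(G)$, so each component has odd order at least $3$. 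It then remains to forbid four or more components on a side: if $G[V_1]$ had at least four factor-critical components, I would match $V_2$ internally, reach a $k$-matching by absorbing the leftover $V_2$-vertices across the cut, and show that this strands an odd component of $V_1$ with no available partner, contradicting $k$-extendability; the bound $\alpha(G)\le k$ from Theorem~\ref{theorem:MV1} provides extra leverage for such stranding arguments. The main obstacle is carrying out this elimination of $A_i$, $C_i$ and the reduction to exactly two components simultaneously on both halves while preserving the balance $|V_1|=|V_2|=2k$.
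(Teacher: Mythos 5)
Your parts (1) and (2) are correct. Part (2) is exactly the paper's argument. Part (1) is a genuinely different route: the paper fixes a violating set $S$ of size exactly $2k$, uses $k$-extendability to bound $\alpha'(G[S])\le k-1$, and reaches a contradiction by counting the degrees of two $M_S$-exposed vertices; your Tutte-type count via Lemma~\ref{lemma:Y2} over all $|S|=2k+t$ is cleaner and in fact never uses extendability at all, so it proves the stronger statement that every graph on $4k$ vertices with $\delta\ge 3k$ is $2k$-factor-critical. Your reduction at the start of part (3) --- $G$ fails to be $2k$-factor-critical iff some balanced partition has no perfect matching on either side --- is also correct and matches the paper's use of extendability.

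The genuine gap is in the forward direction of part (3), and it is twofold. First, you assert that $A_i=C_i=\emptyset$ follows from $\delta\ge 2k+1$ and $\kappa\ge 2k$; it does not. The paper needs extendability in two essential places here: to show that every edge from $D_S$ into the other side lands in $D_{\bar S}$ (extend a maximum matching of $G[S]$ missing $v\in D_S$ together with a cross edge $uv$, and argue the restriction to $G-S$ is a \emph{maximum} matching of $G-S$), and to show $A_S$ spans no edge (extend a maximum matching of $G-S$ plus an edge inside $A_S$ and contradict Lemma~\ref{lemma:GE1}(b)). Both steps moreover rely on knowing the deficiencies of the two sides, which you have not pinned down. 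Second, and more seriously, your elimination of four or more components fails when both sides have the \emph{same} number of components: if $c_1=c_2=c\ge 4$, your stranding count is satisfiable --- the $c$ crossed vertices of $V_1$ can hit each of the $c$ components exactly once, so nothing is stranded --- and the symmetric argument only yields $c_1=c_2$, not $c_1=c_2=2$. Ruling out $c_1=c_2\ge 4$ requires a further idea (one can do it by showing extendability forbids two cross edges from distinct components of $G[V_2]$ into distinct vertices of one component of $G[V_1]$, which forces a pairing of components and disconnects $G$, contradicting $\kappa\ge 2k$; but you give no such argument). The paper sidesteps all of this with an extremal device you never invoke: it chooses the violating set $S$ to \emph{maximize} $\alpha'(G[S])$, and an exchange argument (swap an exposed vertex of $S$ for its partner in a perfect matching of $G$) shows this maximum is exactly $k-1$ on both sides; then Lemma~\ref{lemma:GE1}(c) with $A=C=\emptyset$ immediately gives exactly two components per side. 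Without that extremal choice, or a substitute for the $c_1=c_2\ge 4$ case, your proof of part (3) is incomplete.
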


\begin{proof}
Let $G$ be a non-bipartite $k$-extendable graph with $\nu(G)=4k$.
By Theorem \ref{theorem:LY1}, $\delta(G)\geq \kappa(G) \geq 2k$.
We will, one by one, discuss the three cases above.

(1) $\delta(G)\geq 3k.$ If $G$ is not $2k$-factor-critical, then
there exists a set $S\subseteq V(G)$ of size $2k$, such that $G-S$
does not have a perfect matching. Let $M_S=\{u_1v_1, u_2v_2,
\ldots, u_rv_r\}$ be a maximum matching of $G[S]$ of size $r$.
Clearly $r\leq k-1$. Hence there are at least two vertices $w_1$,
$w_2\in S$ that are not covered by $M_S$. If $w_1u_i$, $w_2v_i\in
E(G)$ for any $1\leq i \leq r$, then $(M_S\backslash \{u_iv_i\})
\cup \{w_1u_i, w_2v_i\}$ is a matching of $G[S]$ of size $r+1$,
contradicting the maximality of $M_S$. So $|\{w_1u_i, w_2v_i\}\cap
E(G)|\leq 1$. Similarly $|\{w_1v_i, w_2u_i\}\cap E(G)|\leq 1$.
Therefore $e(\{w_1,w_2\},\{u_i,v_i\})\leq 2$ for $1\leq i\leq r$.
Then we have
$$6k\leq d(w_1)+d(w_2)\leq 2r+2k+2k=4k+2r \leq 6k-2,$$ a contradiction.

(2) $\delta(G)= 2k$. Let $v$ be a vertex of degree $2k$. Then $v$
is an isolated vertex in $G-N(v)$, where $N(v)$ denotes the set of
the neighbors of $v$ in $G$. So $G-N(v)$ does not have a perfect
matching and $G$ is not $2k$-factor-critical.

(3) $2k+1\leq \delta(G)\leq 3k-1$. If there exists a partition of
$V(G)$ as stated, then $G[V_2]=G-V_1$ does not have a perfect
matching and hence $G$ is not $2k$-factor-critical.

Conversely, suppose that $G$ is not $2k$-factor-critical. Then
there exists a vertex set $S\subseteq V(G)$ of order $2k$, such
that $G-S$ does not have a perfect matching. We choose $S$ so that
$\alpha^\prime(G[S])$ has the maximum value. Clearly,
$\alpha^\prime(G[S])\leq k-1$.

Let $M_S$ be a maximum matching of $G[S]$, then there exist two
vertices $u_{1}$ and $u_{2}$ in $G[S]$ that are not covered by
$M_S$. By Lemma \ref{lemma:Z1}, $M_S$ is contained in a perfect
matching $M$ of $G$. Let $u_{i}v_{i}\in M$, where $v_{i}\in
V(G-S)$, $i=1$, 2. Let $S^{\prime}=(S\backslash \{u_{2}\})\cup
\{v_{1}\}$. Then $M_S\cup\{u_1v_1\}$ is a matching of
$G[S^\prime]$ of size $\alpha^\prime(G[S])+1$. By the choice of
$S$, $G-S^{\prime}$ has a perfect matching $M_{\bar{S^{\prime}}}$
of size $k$. Then $M_{\bar{S^{\prime}}}$ is contained in a perfect
matching $M^\prime$ of $G$. Clearly, $M^\prime\cap E(G[S^\prime])$
is a perfect matching of $G[S^\prime]$ and $M^\prime\cap E(G[S])$
is a matching of $G[S]$ of size $k-1$. Therefore,
$|M_S|=\alpha^\prime(G[S])=k-1$. Furthermore, $M\cap E(G-S)$ is a
matching of $G-S$ of size $k-1$, hence $\alpha^\prime(G-S)=k-1$.

Apply Lemma \ref{lemma:GE1} on $G[S]$ and $G-S$. Let
$C_S=C(G[S])$, $A_S=A(G[S])$, $D_S=D(G[S])$, $C_{\bar{S}}=C(G-S)$,
$A_{\bar{S}}=A(G-S)$ and $D_{\bar{S}}=D(G-S)$.

Firstly, we have $D_S$, $D_{\bar{S}}\neq \emptyset$. Let $v\in
D_S$. Then $v$ is missed by a maximum matching, say $M_S$, of
$G[S]$. Since $\delta(G)\geq 2k+1$, $v$ has at least one neighbor,
say $u$, in $V(G-S)$. By the extendibility of $G$, $M_S\cup
\{uv\}$ is contained in a perfect matching $M_0$ of $G$. Moreover
$M_0\cap E(G-S)$ is a maximum matching of $G-S$, which misses $u$.
So $u\in D_{\bar{S}}$. Therefore, $e(D_S,A_{\bar{S}}\cup
C_{\bar{S}}) =e(D_{\bar{S}},A_{S}\cup C_{S}) = 0$.

Now we prove that $A_S\cup C_S=A_{\bar{S}} \cup
C_{\bar{S}}=\emptyset$. By contradiction, suppose that at least
one of the equalities does not hold, say $A_S\cup C_S\neq
\emptyset$. If $A_{\bar{S}}\cup C_{\bar{S}}= \emptyset$, then
$D_S$ is a cut set of $G$ of size less than $2k$, contradicting
$\kappa(G)\geq 2k$. Hence we can assume that $A_{\bar{S}}\cup
C_{\bar{S}}\neq \emptyset$. Then both $D_S\cup A_{\bar{S}}$ and
$D_{\bar{S}}\cup A_S$ are cut sets of $G$. Thus we have $|D_S\cup
A_{\bar{S}}|\geq 2k$ and $|D_{\bar{S}}\cup A_S|\geq 2k$. However
$|D_S\cup A_{\bar{S}}|+|D_{\bar{S}}\cup A_S|=
\nu(G)-|C_S|-|C_{\bar{S}}|\leq 4k$. So all equalities must hold,
that is, $|D_S|+|A_{\bar{S}}|=|D_{\bar{S}}|+|A_S|=2k$ and
$C_S=C_{\bar{S}}=\emptyset$.

By our assumption, we must have $A_S, A_{\bar{S}}\neq \emptyset$.
Suppose that there is an edge $e\in E(G)$ connecting two vertices
in $A_S$. Take a maximum matching $M_{\bar{S}}$ of $G-S$, by the
extendibility of $G$, $M_{\bar{S}}\cup \{e\}$ is contained in a
perfect matching $M_1$ of $G$. But then $M_1\cap E(G[S])$ is a
maximum matching of $G[S]$ containing $e$, contradicting Lemma
\ref{lemma:GE1} (b). Hence $A_S$ spans no edge of $G$. Then for
any $w\in A_S$, $d(w)\leq |A_{\bar{S}}|+|D_S|=2k$, contradicting
$\delta(G)\geq 2k+1$. Therefore $A_S=\emptyset$ and similarly
$A_{\bar{S}}=\emptyset$.

Now we have $D_S=S$ and $D_{\bar{S}}=V(G)\backslash S$. By Lemma
\ref{lemma:GE1} (a), each component of $G[S]$ and $G-S$ is
factor-critical. By Lemma \ref{lemma:GE1} (c),
$o(D_S)=o(D_{\bar{S}})=2$, hence each of $G[S]$ and $G-S$ consists
of two factor-critical components. Finally, since $\delta(G) \geq
2k+1$, all the components must have size at least 3.
\end{proof}

\section{Factor-criticality and independence number}
The lower bound in the theorem below has been proved in a remark
in \cite{F2}. Note that since every $2k$-factor-critical graph is
$k$-extendable, it is a straight consequence of Theorem
\ref{theorem:MV1} when $n$ is even. The sharpness can be verified
by the graph $G=K_{(\nu+n)/2} \vee ((\nu-n)/2)K_1$.
\begin{theorem} \label{theorem:crt_ind_1}
Let $G$ be a $n$-factor-critical graph of order $\nu$. Then
$\alpha(G)\leq (\nu-n)/2$. The bound for $\alpha(G)$ is sharp.
\end{theorem}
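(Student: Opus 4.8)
The plan is to prove the bound $\alpha(G) \leq (\nu - n)/2$ directly from the definition of $n$-factor-criticality, using Lemma~\ref{lemma:Y2} as the main tool. Let $I$ be a maximum independent set of $G$, so $|I| = \alpha(G)$. The key observation is that an independent set behaves exactly like a collection of odd components (each a single vertex) once we delete a suitable vertex set, so I would look for a set $S$ whose removal leaves the vertices of $I$ as isolated vertices. The natural candidate is $S = N(I) = \bigcup_{v \in I} N(v)$, the set of all neighbors of vertices in $I$; after deleting $S$, every vertex of $I$ becomes isolated and hence forms an odd component of $G - S$.

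First I would verify the parity condition and set up the counting. By Lemma~\ref{lemma:Y2}, $n$-factor-criticality gives us $\nu \equiv n \pmod 2$ and the inequality $o(G - S) \leq |S| - n$ for every $S \subseteq V(G)$ with $|S| \geq n$. Taking $S = N(I)$, each of the $\alpha(G)$ vertices of $I$ is an isolated (hence odd) component of $G - S$, so $o(G - S) \geq \alpha(G)$. To apply the lemma I must check $|S| \geq n$; this should follow from the $n$-connectivity guaranteed by Theorem~\ref{theorem:F1}, since $N(I)$ separates $I$ from the rest of the graph (provided $I$ is not all of $V(G)$, an easy degenerate case to dispose of separately). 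Combining $\alpha(G) \leq o(G-S) \leq |S| - n = |N(I)| - n$ with the trivial bound $|N(I)| \leq \nu - \alpha(G)$ (since $N(I)$ is disjoint from the independent set $I$) yields $\alpha(G) \leq \nu - \alpha(G) - n$, which rearranges to $2\alpha(G) \leq \nu - n$, exactly the claimed bound.

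The main obstacle I anticipate is handling the boundary hypotheses of Lemma~\ref{lemma:Y2} cleanly, specifically guaranteeing $|S| = |N(I)| \geq n$ so that the lemma applies, and ensuring $N(I)$ is genuinely disjoint from $I$ (which holds because $I$ is independent, so no vertex of $I$ is adjacent to another vertex of $I$). The connectivity bound from Theorem~\ref{theorem:F1} handles the former, but one must be careful in the edge case where $I = V(G)$, i.e.\ $G$ has no edges; this cannot occur for a connected graph on more than one vertex, so it is harmless here. For sharpness, the excerpt already supplies the extremal graph $G = K_{(\nu+n)/2} \vee ((\nu-n)/2)K_1$: its independent set is the $(\nu-n)/2$ copies of $K_1$, giving $\alpha(G) = (\nu-n)/2$, and one checks via Lemma~\ref{lemma:Y2} that this graph is indeed $n$-factor-critical, so no routine grinding beyond a quick verification is needed.
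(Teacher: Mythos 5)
Your proof is correct in its core argument, but it takes a genuinely different route from the paper's. The paper gives no self-contained proof of Theorem \ref{theorem:crt_ind_1}: it cites a remark of Favaron (\cite{F2}) for the bound, observes that for even $n=2k$ the statement is an immediate consequence of Theorem \ref{theorem:MV1} (since every $2k$-factor-critical graph with $k\geq 1$ is $k$-extendable and non-bipartite), and then only exhibits the extremal graph $K_{(\nu+n)/2}\vee ((\nu-n)/2)K_1$. Your argument is instead a direct Tutte-type count via Lemma \ref{lemma:Y2}: taking $S=N(I)$ for a maximum independent set $I$, every vertex of $I$ is isolated, hence an odd component, in $G-S$, so $\alpha(G)\leq o(G-S)\leq |S|-n\leq \nu-\alpha(G)-n$, which rearranges to the bound. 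What this buys is uniformity and self-containment: no parity case split on $n$, no appeal to extendibility, and no need for the non-bipartiteness hypothesis that Theorem \ref{theorem:MV1} carries; the paper's route is shorter but delegates the odd-$n$ case entirely to an external reference.

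One slip to repair: the degenerate case you set aside, $I=V(G)$ (an edgeless graph), is not where your argument actually breaks. The appeal to Theorem \ref{theorem:F1} needs $N(I)$ to be a cut set, i.e.\ $G-N(I)$ disconnected. Since all vertices of $I$ are isolated in $G-N(I)$, this holds automatically whenever $|I|\geq 2$, even if $V(G)=I\cup N(I)$; it fails exactly when $|I|=\alpha(G)=1$, that is, when $G$ is complete, for then $G-N(I)$ is a single vertex and $n$-connectivity gives you nothing about $|N(I)|$. That case is trivial, however: the definition of $n$-factor-criticality requires $n\leq \nu-2$, so $\alpha(G)=1\leq (\nu-n)/2$ holds outright. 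With that observation substituted for your parenthetical caveat, the proof is complete; your sharpness verification coincides with the paper's example.
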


Again in a remark in \cite{F2}, Favaron pointed out that the
conditions in Theorem \ref{theorem:AC1} yield
$2k$-factor-criticality rather than $k$-extendibility. Here we
prove a general version for all $n$-factor-critical graphs.

\begin{theorem} \label{theorem:ind_crt_1}
Let $G$ be a graph on $\nu$ vertices, $n$ a positive integer
satisfying $\nu \equiv n\ (mod\ 2)$, $\delta(G)\geq (\nu+n)/2-1$
and $\alpha(G)\leq (\nu-n)/2$. Then $G$ is not $n$-factor-critical
if and only if $(\nu-n)/2$ is odd, $G=G_0 \vee (G_1 \cup G_2)$,
where $\nu(G_0)=n$ and $G_1=G_2=K_{(\nu-n)/2}$. The bounds for
$\delta(G)$ and $\alpha(G)$ are sharp.
\end{theorem}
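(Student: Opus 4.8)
The plan is to treat the two directions separately, writing $m=(\nu-n)/2$ throughout, so that the hypotheses read $\delta(G)\ge n+m-1$ and $\alpha(G)\le m$, while $\nu=n+2m$. For the easy converse, I would assume $m$ is odd and $G=G_0\vee(K_m\cup K_m)$ with $\nu(G_0)=n$; deleting the $n$ vertices of $G_0$ leaves $K_m\cup K_m$, which consists of two odd components and hence has no perfect matching, so $G$ is not $n$-factor-critical.

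For the forward direction I would argue as follows. Since $\nu\equiv n\ (mod\ 2)$ and $G$ is not $n$-factor-critical, Lemma~\ref{lemma:Y2} supplies a set $S$ with $|S|=s\ge n$ and $o(G-S)>s-n$; a parity check (using $o(G-S)\equiv\nu-s\equiv s-n\ (mod\ 2)$) upgrades this to $o(G-S)\ge s-n+2$. Write $c=o(G-S)$. Two structural estimates then drive the proof. First, choosing one vertex from each component of $G-S$ yields an independent set, so $\alpha(G)\ge c\ge s-n+2$; combined with $\alpha(G)\le m$ this gives $s\le n+m-2$. Second, any vertex $v$ in a component $O$ of $G-S$ has all its non-$S$ neighbours inside $O$, so $\delta(G)\le d(v)\le(|O|-1)+s$, whence every component of $G-S$ has at least $L:=n+m-s\ge 2$ vertices.

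The crux is the counting step. Since the components of $G-S$ cover the $\nu-s$ vertices of $G-S$ and there are at least $c$ of them, each of size $\ge L$, I obtain $\nu-s\ge cL\ge(s-n+2)(n+m-s)$. Setting $s=n+j$ with $j\ge 0$ (so $\nu-s=2m-j$), this inequality reduces to $j\ge j(m-j)$; since $m-j=L\ge 2$, any $j\ge 1$ is impossible, forcing $j=0$, i.e. $s=n$. Feeding $s=n$ back gives $L=m$, $c\ge 2$, and $2m=\nu-s\ge cL=cm$, so $c=2$ and $G-S$ has exactly two components, each of size exactly $m$; being odd, $m$ must be odd. Finally the degree inequality becomes an equality at every vertex of $G-S$: each such vertex is adjacent to all $m-1$ other vertices of its component and to all $n$ vertices of $S$. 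Hence each component is $K_m$ and $V(G)\setminus S$ is completely joined to $S$, so $G=G[S]\vee(K_m\cup K_m)$ with $\nu(G[S])=n$, as required.

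I expect the counting squeeze to be the main obstacle, since it must exploit both hypotheses simultaneously to eliminate every possibility except $|S|=n$, exactly two components, and equal sizes; once equality is forced everywhere, the clique-and-join structure and the parity of $m$ are immediate. For sharpness I would exhibit explicit constructions showing that neither threshold can be relaxed: weakening $\delta(G)$ to $n+m-2$ admits joins $G_0\vee(K_a\cup K_b)$ with $a\ne b$ both odd, which are non-$n$-factor-critical yet fall outside the stated symmetric family, and a parallel example handles the independence bound.
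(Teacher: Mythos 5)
Your proof of the equivalence itself is correct, and it is essentially the paper's argument in a different dress: with $m=(\nu-n)/2$ and $s=|S|$, your counting inequality $(s-n+2)(n+m-s)\le \nu-s$ is exactly the paper's inequality (1) (degree bound on a minimum odd component versus the averaging bound $\nu(G_1)\le(\nu-s)/(s-n+2)$), rearranged; solving it gives the same two branches $s\le n$ or $s\ge n+m-1$, and you kill the large branch with the independence bound just as the paper does, only you invoke $\alpha(G)\ge o(G-S)$ before the algebra rather than after. The extraction of the structure once $s=n$ (two odd components of size exactly $m$, degree equality forcing cliques and complete join, parity of $m$) coincides with the paper's, as does the converse direction.

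The genuine gap is in the sharpness claim, which is part of the statement. Your proposed witness for the $\delta$ bound, $G_0\vee(K_a\cup K_b)$ with $a\ne b$ both odd and $a+b=2m$, attains $\delta(G)=n+\min(a,b)-1=n+m-2$ only when $m$ is even (so that $m-1$ is odd); when $m$ is odd, two distinct odd cliques force $\min(a,b)\le m-2$, hence $\delta(G)\le n+m-3$, which does not witness sharpness at the threshold $(\nu+n)/2-2$ --- and the odd-$m$ case is precisely the delicate one, since that is when the exceptional graph exists. The paper's example $\bigl(K_3\cup(m-1)K_1\bigr)\vee K_{n+m-2}$ works for every parity of $m$: it has $\delta=n+m-2$, $\alpha=m$, is not $n$-factor-critical, and is not the exceptional graph. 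For the $\alpha$ bound you give no construction at all ("a parallel example"); the paper uses $(m+1)K_1\vee K_{n+m-1}$, which has $\delta=n+m-1$, $\alpha=m+1$, and is not $n$-factor-critical. You would need to supply both constructions (or equivalents valid for all parities) to complete the proof.
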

\begin{proof}
Suppose that $G$ is not $n$-factor-critical. By Lemma
\ref{lemma:Y2}, there exists $S\subseteq V(G)$ of size at least
$n$, such that $o(G-S)>|S|-n$. By parity we have $o(G-S)\geq
|S|-n+2$.

Let $G_1$ be an odd component of $G-S$ of the minimum size, and
$v\in V(G_1)$. Then $\nu(G_1)\leq (\nu-|S|)/(|S|-n+2)$. Hence
\begin{equation} \label{equation:1}
\frac{\nu+n}{2}-1\leq \delta(G)\leq d(v)\leq \nu(G_1)+|S|-1 \leq
\frac{\nu-|S|}{|S|-n+2}+|S|-1.
\end{equation}

Solving $|S|$ from (\ref{equation:1}) we have $|S|\leq n$ or
$|S|\geq (\nu+n)/2-1$. If $|S|\geq (\nu+n)/2-1$, then $o(G-S)\geq
(\nu+n)/2-1-n+2=(\nu-n)/2+1$. Selecting one vertex from each odd
component of $G-S$ we form an independent set of $G$ of order no
less than $(\nu-n)/2+1$, contradicting $\alpha(G)\leq (\nu-n)/2$.
Therefore we can assume that $|S|\leq n$. However $|S|\geq n$ by
our selection of $|S|$, so we have $|S|=n$ and all equalities in
(\ref{equation:1}) must hold. Hence $G-S$ has exactly two
components $G_1$ and $G_2$ of odd size $(\nu-n)/2$, $G_1$ and
$G_2$ must be $K_{(\nu-n)/2}$ and all vertices in $V(G_1)\cup
V(G_2)$ are adjacent to all vertices in $S$. Thus $G=G_0 \vee (G_1
\cup G_2)$, where $G_0=G[S]$ is of order $n$ and
$G_1=G_2=K_{(\nu-n)/2}$.

On the contrary if $G$ satisfies the conditions stated, then
$G-V(G_0)$ does not have a perfect matching, so $G$ is not
$n$-factor-critical.

The graph $G=((\nu-n)/2+1)K_1 \vee K_{(\nu+n)/2-1}$ shows that the
bound for $\alpha(G)$ is sharp, while the sharpness of the bound
for $\delta(G)$ can be verified by the graph $G=(K_3 \cup
((\nu-n)/2)-1)K_1) \vee K_{(\nu+n)/2-2}$.
\end{proof}
Combining Theorem \ref{theorem:crt_ind_1} and Theorem
\ref{theorem:ind_crt_1} we have the following result.
\begin{theorem} \label{theorem:ind_eq_crt}
Let $G$ be a graph on $\nu$ vertices, $n$ a positive integer
satisfying $\nu \equiv n\ (mod\ 2)$, $\delta(G)\geq (\nu+n)/2-1$.
Suppose that $G$ can not be expressed as $G=G_0 \vee (G_1 \cup
G_2)$, where $(\nu-n)/2$ is odd, $\nu(G_0)=n$ and
$G_1=G_2=K_{(\nu-n)/2}$. Then $G$ is $n$-factor-critical if and
only if $\alpha(G)\leq (\nu-n)/2$.
\end{theorem}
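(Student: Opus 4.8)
The plan is to obtain the stated equivalence directly from the two immediately preceding theorems, handling each direction of the ``if and only if'' with one of them; no new combinatorial argument should be required.

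For the forward implication I would assume that $G$ is $n$-factor-critical and simply apply Theorem \ref{theorem:crt_ind_1} to conclude $\alpha(G)\leq(\nu-n)/2$. It is worth emphasizing that this direction uses neither the degree hypothesis $\delta(G)\geq(\nu+n)/2-1$ nor the exclusion of the exceptional decomposition, so it is essentially immediate and holds for every $n$-factor-critical graph.

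For the backward implication I would assume $\alpha(G)\leq(\nu-n)/2$. Combined with the standing hypotheses $\nu\equiv n\ (\mathrm{mod}\ 2)$ and $\delta(G)\geq(\nu+n)/2-1$, this places $G$ exactly within the hypotheses of Theorem \ref{theorem:ind_crt_1}. That theorem pins down precisely when such a graph fails to be $n$-factor-critical, namely exactly when $G=G_0\vee(G_1\cup G_2)$ with $(\nu-n)/2$ odd, $\nu(G_0)=n$, and $G_1=G_2=K_{(\nu-n)/2}$. Since the present theorem explicitly forbids any such decomposition of $G$, the ``only if'' part of Theorem \ref{theorem:ind_crt_1} leaves no room for failure, and $G$ must be $n$-factor-critical.

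The only subtlety, rather than a genuine obstacle, is bookkeeping the logical orientation of Theorem \ref{theorem:ind_crt_1}: it is phrased as ``$G$ is not $n$-factor-critical if and only if (exceptional structure)'', so the backward direction here invokes its contrapositive, i.e.\ under the shared hypotheses the absence of the exceptional decomposition yields $n$-factor-criticality. One should also verify that discarding the exceptional case does no harm in the forward direction, which it does not, since $n$-factor-criticality by itself already delivers the independence bound via Theorem \ref{theorem:crt_ind_1}. With both ingredients in hand, the proof reduces to assembling the two characterizations.
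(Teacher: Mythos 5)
Your proposal is correct and matches the paper's own treatment: the paper derives Theorem \ref{theorem:ind_eq_crt} precisely by combining Theorem \ref{theorem:crt_ind_1} (for the forward direction) with the contrapositive of Theorem \ref{theorem:ind_crt_1} (for the backward direction, using the exclusion of the exceptional graph $G_0 \vee (G_1 \cup G_2)$). Nothing further is needed.
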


By Theorem \ref{theorem:ZWL1} and Theorem
\ref{theorem:ext_crt_4k}, when $k\geq \nu(G)/4$ and $\delta(G)\geq
\nu(G)/2+k-1$, $G$ is $k$-extendable if and only if $G$ is
$2k$-factor-critical, only except that when $k=\nu(G)/4$ is odd
and $G=(K_k \cup K_k)\vee(K_k \cup K_k)$, $G$ is $k$-extendable
but not $2k$-factor-critical. But then $G$ is exactly the
exceptional graph in Theorem \ref{theorem:ind_eq_crt}. Hence we
have the following corollary which has Theorem \ref{theorem:AC1}
as part of it.

\begin{corollary}
Let $G$ be a graph with even order $\nu$, $k$ a positive integer
such that $\nu/4 \leq k \leq \nu/2 -1$ and $\delta(G)\geq
\nu/2+k-1$. Suppose that $G$ can not be expressed as $G=G_0 \vee
(G_1 \cup G_2)$, where $\nu/2-k$ is odd, $\nu(G_0)=2k$ and
$G_1=G_2=K_{\nu/2-k}$. Then the following are equivalent.
$\newline$ (1) $G$ is $k$-extendable,
$\newline$ (2) $G$ is $2k$-factor-critical,
$\newline$ (3) $\alpha(G)\leq \nu/2-k$.
\end{corollary}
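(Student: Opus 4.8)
The plan is to prove the three statements equivalent by a short cycle of implications, drawing on the theorems already established. The backbone is Theorem~\ref{theorem:ind_eq_crt}: substituting $n=2k$ there reproduces exactly the hypotheses of the corollary, since $\nu\equiv 2k\equiv 0\ (mod\ 2)$, the bound $\delta(G)\ge(\nu+2k)/2-1=\nu/2+k-1$ matches, and the forbidden configuration $G_0\vee(G_1\cup G_2)$ with $(\nu-2k)/2=\nu/2-k$ odd, $\nu(G_0)=2k$, $G_1=G_2=K_{\nu/2-k}$ is literally the excluded graph of the corollary. Hence Theorem~\ref{theorem:ind_eq_crt} already yields $(2)\Leftrightarrow(3)$, and it remains only to splice in statement $(1)$ by closing the cycle $(2)\Rightarrow(1)\Rightarrow(3)$. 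The implication $(2)\Rightarrow(1)$ is the elementary fact, noted in the introduction, that every $2k$-factor-critical graph is $k$-extendable.

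The implication $(1)\Rightarrow(3)$ I would obtain from Theorem~\ref{theorem:MV1}, which gives $\alpha(G)\le\nu/2-k$ for every \emph{non-bipartite} $k$-extendable graph. The one thing to verify before invoking it is that $G$ is non-bipartite, and here I would use the degree hypothesis. If $G$ were bipartite with parts $A,B$ and $|A|\le|B|$, then $\delta(G)\le|A|\le\nu/2$, whereas $\delta(G)\ge\nu/2+k-1\ge\nu/2$ forces $k=1$, $|A|=|B|=\nu/2$, and $G=K_{\nu/2,\nu/2}$; the size constraints $\nu/4\le k\le\nu/2-1$ then pin $\nu=4$, i.e. $G=C_4$. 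But $C_4=(2K_1)\vee(K_1\cup K_1)$ is precisely the excluded graph (take $G_0=2K_1$, with $\nu/2-k=1$ odd), so it is ruled out by hypothesis. Thus $G$ is non-bipartite, Theorem~\ref{theorem:MV1} applies, and the cycle $(2)\Rightarrow(1)\Rightarrow(3)\Rightarrow(2)$ is complete.

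An alternative route for $(1)\Leftrightarrow(2)$, closer to the narrative preceding the corollary, is to use Theorem~\ref{theorem:ZWL1} when $k\ge(\nu+2)/4$ and Theorem~\ref{theorem:ext_crt_4k} when $k=\nu/4$. In the latter case $\delta(G)\ge\nu/2+k-1=3k-1$, so only part~(1) of Theorem~\ref{theorem:ext_crt_4k} (giving $2k$-factor-criticality outright) and part~(3) can occur; in part~(3) a counting argument on a vertex of each factor-critical component, using $\delta=3k-1$ together with the absence of edges between the two components of each $G[V_i]$, forces all four components to be $K_k$ and the cross-join to be complete, i.e. $G=(K_k\cup K_k)\vee(K_k\cup K_k)$ with $k$ odd---again the excluded graph. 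Either route reduces the problem to checking that the single forbidden configuration is the \emph{only} obstruction.

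The main obstacle I anticipate is bookkeeping rather than a hard idea: one must confirm that the one excluded configuration $G_0\vee(G_1\cup G_2)$ simultaneously (i) removes the lone bipartite graph $C_4$, so that Theorem~\ref{theorem:MV1} is legitimately applicable, and (ii) coincides exactly with the exceptional graph of Theorem~\ref{theorem:ind_eq_crt} (equivalently, of the $k=\nu/4$, $\delta=3k-1$ branch of Theorem~\ref{theorem:ext_crt_4k}). Getting the parity ($\nu/2-k$ odd) and the orders ($\nu(G_0)=2k$, $G_i=K_{\nu/2-k}$) to line up across these theorems under the substitution $n=2k$ is the delicate point; once that matching is verified, the equivalences fall out at once.
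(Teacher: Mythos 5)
Your proposal is correct, and its primary route is genuinely different from the paper's. The paper splices in statement (1) through the equivalence $(1)\Leftrightarrow(2)$: it invokes Theorem~\ref{theorem:ZWL1} when $k\geq(\nu+2)/4$ and Theorem~\ref{theorem:ext_crt_4k} when $k=\nu/4$ (there $\delta(G)\geq 3k-1$ leaves only part (1), which gives $2k$-factor-criticality outright, or the $\delta=3k-1$ instance of part (3), which a degree count pins down to $(K_k\cup K_k)\vee(K_k\cup K_k)$ with $k$ odd, i.e.\ the excluded graph), and then obtains $(2)\Leftrightarrow(3)$ from Theorem~\ref{theorem:ind_eq_crt} with $n=2k$ --- this is precisely what you describe as your ``alternative route''. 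Your main argument instead closes the cycle $(2)\Rightarrow(1)\Rightarrow(3)\Rightarrow(2)$, using only the elementary fact $(2)\Rightarrow(1)$, Theorem~\ref{theorem:MV1} for $(1)\Rightarrow(3)$, and Theorem~\ref{theorem:ind_eq_crt} for $(3)\Rightarrow(2)$. This buys a real simplification: it bypasses Theorems~\ref{theorem:ZWL1} and~\ref{theorem:ext_crt_4k} altogether, and in particular the counting argument needed to identify the exceptional graph in part (3) of Theorem~\ref{theorem:ext_crt_4k}, which the paper asserts but never carries out. Your route is also more careful on a point the paper glosses over: every quoted theorem (Theorem~\ref{theorem:MV1} as well as Theorems~\ref{theorem:ZWL1} and~\ref{theorem:ext_crt_4k}) requires $G$ non-bipartite, and you verify explicitly that the degree hypothesis forces any bipartite candidate to be $K_{\nu/2,\nu/2}$, hence $C_4$ under the constraint $\nu/4\leq k\leq\nu/2-1$, hence an instance of the excluded configuration $2K_1\vee(K_1\cup K_1)$, so the corollary's hypothesis legitimately removes it. What the paper's route buys in exchange is the stronger intermediate statement that $(1)\Leftrightarrow(2)$ holds for all non-bipartite graphs with $k\geq\nu/4$ and $\delta(G)\geq\nu/2+k-1$ up to a single explicit exception; your cycle proves the corollary as stated but does not recover that standalone equivalence.
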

\section{Extendibility and independence number}

In this section we generalize Theorem \ref{theorem:MV1} for
$k\frac{1}{2}$-extendable graphs.

\begin{theorem} \label{theorem:k1/2_ind}
Let $G$ be an $k\frac{1}{2}$-extendable graph on $\nu$ vertices.
Then $\alpha(G)\leq (\nu-1)/2-k$. Moreover, the upper bound for
$\alpha(G)$ is sharp for all $k$ and $\nu$.
\end{theorem}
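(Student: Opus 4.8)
The plan is to reduce the odd-order statement about $k\frac{1}{2}$-extendable graphs to the even-order statement already recorded in Theorem~\ref{theorem:MV1}, using the bridge between the two notions provided by Lemma~\ref{lemma:Y1}. Concretely, I would form the graph $G' = G \vee K_1$ on $\nu+1$ vertices by adding a single universal vertex $w$. By Lemma~\ref{lemma:Y1}, since $G$ is $k\frac{1}{2}$-extendable, $G'$ is $(k+1)$-extendable. The aim is then to invoke the bound $\alpha(G') \leq \nu(G')/2 - (k+1)$ from Theorem~\ref{theorem:MV1} and translate it back into a bound on $\alpha(G)$.

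The first step is to observe that an independence number is not changed by adding a dominating (universal) vertex: any independent set of $G'$ can contain $w$ only if it is the singleton $\{w\}$, since $w$ is adjacent to everything, so $\alpha(G') = \max\{1, \alpha(G)\} = \alpha(G)$ whenever $\alpha(G) \geq 1$, which holds for any graph with at least one vertex. Plugging $\nu(G') = \nu+1$ and $\alpha(G') = \alpha(G)$ into Theorem~\ref{theorem:MV1} gives
\[
\alpha(G) = \alpha(G') \leq \frac{\nu+1}{2} - (k+1) = \frac{\nu-1}{2} - k,
\]
which is exactly the desired inequality. The one hypothesis of Theorem~\ref{theorem:MV1} that must be checked is non-bipartiteness of $G'$: but $G'$ contains the universal vertex $w$ joined to every vertex of $G$, and since $\nu \geq 1$ and $G$ is connected with $k \geq 1$ (so $\nu \geq 3$), $G'$ contains a triangle and is therefore non-bipartite, so Theorem~\ref{theorem:MV1} applies without restriction.

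The main obstacle is not the algebra but confirming that the parity and range conditions implicit in the definitions are consistent across the reduction, namely that $G'$ genuinely qualifies as a $(k+1)$-extendable graph in the sense required (it has even order $\nu+1$ when $\nu$ is odd, and one must check $k+1 \leq (\nu+1-2)/2 = (\nu-1)/2$, equivalently $k \leq (\nu-3)/2$, which is forced by the existence of a matching of size $k$ in $G-v$ on $\nu-1$ vertices together with the extendibility requirement). These are routine to verify from the hypotheses. For the sharpness claim, I would exhibit an explicit extremal family attaining equality: the natural candidate is $G = K_{(\nu+1)/2+k-1} \vee ((\nu-1)/2-k)K_1$ with one further vertex absorbed to keep the order odd, mirroring the sharpness construction $K_{(\nu+n)/2}\vee((\nu-n)/2)K_1$ used for Theorem~\ref{theorem:crt_ind_1}; one then checks directly that this graph is $k\frac{1}{2}$-extendable and has independence number exactly $(\nu-1)/2-k$, completing the proof.
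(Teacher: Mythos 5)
Your proposal is correct and follows essentially the same route as the paper's own proof: form $G\vee K_1$, apply Lemma~\ref{lemma:Y1} to get $(k+1)$-extendability, invoke Theorem~\ref{theorem:MV1}, and observe that a universal vertex leaves the independence number unchanged (your explicit check of non-bipartiteness of $G\vee K_1$ is a detail the paper silently omits). For sharpness, your candidate with the ``absorbed'' vertex placed in the clique is exactly the paper's example $((\nu-1)/2-k)K_1 \vee K_{(\nu+1)/2+k}$.
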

\begin{proof}
By Lemma \ref{lemma:Y1}, $G\vee K_1$ is $(k+1)$-extendable. By
Theorem \ref{theorem:MV1}, $\alpha(G\vee K_1) \leq (\nu+1)/2 -
(k+1)=(\nu-1)/2-k$. Furthermore, any independent set $S$ of $G\vee
K_1$ with $|S|>1$ can not contain the vertex in $K_1$. Hence $S$
is also an independent set of $G$. So $\alpha(G)=\alpha(G\vee K_1)
\leq(\nu-1)/2-k$.

To see that the bound is sharp, consider the graph
$G=((\nu-1)/2-k)K_1 \vee K_{(\nu+1)/2+k} $.
\end{proof}


\end{document}